\definecolor{cbblue}{RGB}{0,114,178}
\definecolor{cborange}{RGB}{230,159,0}
\newtheorem{theorem}{Theorem}[section]
\newtheorem{lemma}[theorem]{Lemma}
\newtheorem{proposition}[theorem]{Proposition}
\newtheorem{corollary}[theorem]{Corollary}
\theoremstyle{definition}
\newtheorem{definition}[theorem]{Definition}
\newtheorem{example}[theorem]{Example}
\newtheorem{remark}[theorem]{Remark}
\newtheorem*{theorem*}{Theorem}
\newcommand \acknowledgements{\paragraph{\textbf{Acknowledgements}}}
\newcommand{\Z}{\mathbb{Z}}
\newcommand{\R}{\mathbb{R}}
\newcommand{\F}{\mathcal{F}}
\newcommand{\D}{\mathcal{D}}
\newcommand{\precdot}{\prec\cdot ~}
\newcommand{\cube}{\text{\mancube}}
\DeclareMathOperator{\des}{des}
\DeclareMathOperator{\cover}{cover}
\renewcommand{\P}{\mathcal{P}}
\DeclareMathOperator{\cDes}{cDes}
\DeclareMathOperator{\OSP}{OSP}
\newcommand{\dbtilde}[1]{\accentset{\approx}{#1}}
\DeclareMathOperator{\wt}{wt}
\DeclareMathOperator{\Ehr}{Ehr}
\newcommand{\aff}{\mathrm{aff}}
\title{The Ehrhart series of alcoved polytopes}
\author{Elisabeth Bullock}
\address{Massachusetts Institute of Technology}
\email{edb22@mit.edu}
\author{Yuhan Jiang}
\address{Harvard University}
\email{yjiang@math.harvard.edu}
\date{\today}
\begin{document}

\begin{abstract}
Alcoved polytopes are convex polytopes, which are the closure of a union of alcoves in an affine Coxeter arrangement.
They are rational polytopes and, therefore, have Ehrhart quasipolynomials.
Here we describe a method for computing the generating function of the Ehrhart quasipolynomial, or Ehrhart series, of any alcoved polytope via a particular shelling order of its alcoves.
We also show a connection between Early's decorated ordered set partitions and this shelling order for the hypersimplex $\Delta_{2,n}$.
\end{abstract}

\maketitle
\setcounter{tocdepth}{1}
%%\tableofcontents

\section{Introduction}

Let $\Phi \subset \R^n$ be an irreducible \emph{crystallographic root system} and $W$ be the corresponding Weyl group. Associated to $\Phi$ is an infinite hyperplane arrangement known as the affine Coxeter arrangement.
The connected components of this hyperplane arrangement are simplices called \emph{alcoves}. 
Lam and Postnikov defined a \emph{proper alcoved polytope} to be the closure of a union of alcoves \cite{alcove2}. 

For any root system, Fomin and Zelevinsky's \emph{generalized associahedra} are examples of polytopes which can be realized as alcoved polytopes \cite{fz03,chapoton}.
In the special situation of the root system $\Phi = A_n$, examples of alcoved polytopes include \emph{hypersimplices} and \emph{positroid polytopes}.

The vertices of alcoved polytopes have rational coordinates, meaning that alcoved polytopes are rational polytopes.
Let $P \subset \R^n$ be a rational convex polytope, with underlying lattice $\Z^n$.
Ehrhart theory is about counting lattice points in rational polytopes.
In particular, Ehrhart showed that the number of lattice points in the $t$-dilate of a rational polytope $P$ is a \emph{quasipolynomial} in $t$ \cite{ehrhart}. 
A quasipolynomial with period $d$ is a function $p: \Z \to \Z$ such that there exist periodic functions $p_i: \Z \to \Z$ with period $d$ so $p(z) = \sum_{i=0}^n p_i(z) z^i$.
We call $E(P,t) = \#(t\cdot P) \cap \Z^n$ the \emph{Ehrhart quasipolynomial} of $P$.
The generating series $\sum_{t=0}^\infty E(P,t) z^t$, called the \emph{Ehrhart series}, is a rational function in $z$.
While the Ehrhart theory of lattice polytopes has been widely studied, the Ehrhart theory of rational polytopes is less explored \cite{beck22,BBKV13,linke11}.

Ehrhart theory extends naturally to polytopes with some facets removed.
The Ehrhart series are \emph{additive}, in the sense that if two half-open polytopes are disjoint, then the Ehrhart series of their union is equal to the sum of their Ehrhart series.

Our main result stated imprecisely is the following:
\begin{theorem*}\label{thm:main-imprecise}
    Fix an irreducible crystallographic root system $\Phi\subset V$, where $\dim(V)=n$.  Let $P$ be an alcoved polytope and let $\Gamma_P=(V,E)$ be the dual graph to the alcove triangulation of $P$.  Pick some $v_0\in V$ and orient the edges of $\Gamma_P$ so that for all $\{u,w\}\in E$, $u\to w$ if and only if $u$ appears before $v$ in the breadth first search algorithm starting at $v_0$.  There exists a weighting of the edges $E$ and parameters $\ell_1,\cdots,\ell_n$ depending only on $\Phi$ such that the Ehrhart series of $P$ is equal to 
    $$\Ehr(P,z) = \frac{\sum_{w \in V} z^{\wt(w)}}{\prod_{i=0}^n (1-z^{\ell_i})}$$
    where $\wt(w) = \sum_{u \to w} \wt((u,w))$ is the sum of the weights of the ingoing edges to $w$.
\end{theorem*}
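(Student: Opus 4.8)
\emph{Overall strategy.} The plan is to triangulate $P$ into its alcoves, peel them off one at a time in the breadth-first-search order rooted at $v_0$, and add up the Ehrhart series of the resulting half-open alcoves using additivity. Three inputs drive the argument. First, the affine Weyl group $W_{\mathrm{aff}}=W\ltimes Q^\vee$ acts simply transitively on the alcoves and preserves the ambient lattice, so any half-open alcove obtained by deleting a prescribed set of facets is lattice-isomorphic to the corresponding half-open version of the fundamental alcove $A_0=\{x:\langle\alpha_i,x\rangle\ge 0\ (1\le i\le n),\ \langle\theta,x\rangle\le 1\}$, and such isomorphisms carry Ehrhart series verbatim. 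Second, a direct lattice-point count in coordinates for which $tA_0=\{x:x_i\ge 0,\ \sum_i\ell_i x_i\le t\}$ and the lattice is $\Z^n$ (here $x_i=\langle\alpha_i,x\rangle$ and $\theta=\sum_{i=1}^n\ell_i\alpha_i$) shows that, for every $S\subseteq\{0,1,\dots,n\}$, the alcove $A_0^S$ obtained by deleting the facets indexed by $S$ (index $0$ being the facet on $\langle\theta,x\rangle=1$) satisfies
\[
\Ehr(A_0^S,z)\;=\;\frac{z^{\sum_{i\in S}\ell_i}}{(1-z)\prod_{i=1}^n(1-z^{\ell_i})}\;=\;\frac{z^{\sum_{i\in S}\ell_i}}{\prod_{i=0}^n(1-z^{\ell_i})},
\]
where $\ell_0:=1$ and $(\ell_0,\dots,\ell_n)$ are the marks of the extended Dynkin diagram of $\Phi$. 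Third, additivity of Ehrhart series over disjoint half-open polytopes.

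\emph{Bookkeeping and the edge weights.} Each interior wall $H$ of the alcove triangulation of $P$ is a facet of exactly two alcoves; carrying either of them onto $A_0$ by the unique element of $W_{\mathrm{aff}}$ that does so sends $H$ to one of the facets $0,1,\dots,n$ of $A_0$, and this label --- the \emph{type} $\tau(H)$ of $H$ --- is independent of the choice, as one checks using the reflection in $H$. Declare the edge of $\Gamma_P$ dual to $H$ to have weight $\ell_{\tau(H)}$, so that $\wt(w)$ is the sum of $\ell_{\tau(H)}$ over the walls $H$ that the alcove $w$ shares with alcoves occurring earlier in the BFS order. Granting the half-open decomposition $P=\bigsqcup_{w\in V}A_w^\circ$, where $A_w^\circ$ is the alcove $w$ with exactly those shared walls removed, the set of removed facets of $w$ is precisely the set of walls dual to the incoming edges $u\to w$, so the first two inputs give $\Ehr(A_w^\circ,z)=z^{\wt(w)}/\prod_{i=0}^n(1-z^{\ell_i})$, and summing over $w\in V$ with additivity yields the formula. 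I would also record the elementary point that, because $P$ is convex, the distance from $v_0$ to $w$ in $\Gamma_P$ equals the number of arrangement walls separating $w$ from $v_0$; consequently adjacent alcoves differ by exactly $1$ in this distance, every edge of $\Gamma_P$ joins alcoves in consecutive BFS layers, and the orientation --- hence $\wt$ --- does not depend on how ties within a layer are broken.

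\emph{The half-open decomposition, and the main obstacle.} The crux is to prove $P=\bigsqcup_{w\in V}A_w^\circ$, i.e.\ that each $p\in P$ lies in exactly one piece. Fix $p$, let $S_p$ be the set of alcoves of $P$ containing $p$, and let $K_p$ be the tangent cone of $P$ at $p$. Near $p$ the affine arrangement is the reflection arrangement $\mathcal{A}_p$ of the finite group $\mathrm{Stab}_{W_{\mathrm{aff}}}(p)$, generated by the reflections in the walls through $p$; since the facets of an alcoved polytope lie on arrangement walls, $K_p$ is an intersection of halfspaces bounded by walls of $\mathcal{A}_p$, so $S_p$ is the set of chambers of a \emph{convex} subcomplex of the finite Coxeter complex of $\mathcal{A}_p$. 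Convex subcomplexes of Coxeter complexes are \emph{gated}: if $C_0$ is the chamber of $\mathcal{A}_p$ pointing from $p$ into the interior of the alcove $v_0$, there is a unique $C^\ast\in S_p$ at minimal gallery distance from $C_0$, and every other $C\in S_p$ is joined to $C^\ast$ by a minimal gallery inside $S_p$, hence has a neighbour in $S_p$ strictly closer to $C_0$. Since the distance from $v_0$ to an alcove of $S_p$ equals its gallery distance to $C_0$ plus a constant (the number of separating walls not through $p$), this means $C^\ast$ is the unique BFS-first alcove of $S_p$ and every other alcove of $S_p$ has a BFS-earlier neighbour in $S_p$ meeting it in a facet through $p$. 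The first fact gives $p\in A_{C^\ast}^\circ$ --- a removed facet of $C^\ast$ would lie in a strictly earlier alcove, which would then belong to $S_p$ and precede $C^\ast$, impossible --- and the second gives $p\notin A_C^\circ$ for $C\ne C^\ast$. The real work, and the step I expect to be hardest, is establishing the gate property in exactly this setting (cones carved from a finite reflection arrangement, with $p$ possibly on $\partial P$), together with the supporting facts that point stabilizers in $W_{\mathrm{aff}}$ are finite reflection groups and that dual-graph distance in a convex union of alcoves counts separating walls; the rest is routine.
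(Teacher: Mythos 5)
Your proposal is correct in outline and arrives at the same decomposition as the paper --- $P=\bigsqcup_{w}A_w^\circ$, with each alcove stripped of the facets it shares with BFS-earlier neighbours, followed by additivity and a closed form for half-open alcoves --- but it proves the two supporting lemmas by genuinely different routes. For the half-open alcove series, the paper starts from Stanley's theorem on closed rational simplices and runs an inclusion--exclusion over the removed facets (\cref{lem:half-opensimplex}), whereas you do a direct count of solutions to $\sum_i \ell_i x_i\le t$ with some coordinates forced positive; both give $z^{\sum_{i\in S}\ell_i}/\prod_{i=0}^n(1-z^{\ell_i})$, though note that your identification of the $\ell_i$ with the marks $a_i$ is only valid when the ambient lattice is the coweight lattice (the paper's $\ell_i$ are the denominators of the vertices $\omega_i/a_i$ in the chosen embedding, which is all the imprecise statement requires). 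The more substantive divergence is in the disjointness lemma. The paper works globally: it invokes Bj\"orner's result that any linear extension of the weak order shells the Coxeter complex, shows this restricts to a shelling of the subcomplex attached to a convex subset of $W_{\aff}$ containing $e$ (\cref{cor:cox}), translates so that $v_0$ corresponds to $e$, and reads off the removed facets from the shelling restriction $\mathscr{R}(C_w)=\{wW_{(s)}\mid s\in\D(w)\}$. You instead argue pointwise: for each $p\in P$ the alcoves containing $p$ form a convex set of chambers in the finite reflection arrangement of $\mathrm{Stab}_{W_{\aff}}(p)$, and gatedness of such sets singles out the unique BFS-first alcove whose half-open piece captures $p$. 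Your route is more local and geometric and avoids shellability entirely, at the cost of needing the gate property and the separating-wall computation of dual-graph distance; the gate property for convex chamber sets in Coxeter complexes is standard (it can be cited from building theory, or deduced from the same convexity characterization, \cref{prop:convex}, that the paper uses), so this is a citable input rather than a gap, but as written it is asserted and not proved. Your observation that adjacent alcoves always lie in consecutive BFS layers, so the orientation is independent of tie-breaking, is a useful point that the paper leaves implicit.
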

We will prove our main result using the additivity of Ehrhart series. 
We will decompose each alcoved polytopes into disjoint union of half-open alcoves, and then add up all their Ehrhart series.

\begin{figure}[H]
    \centering
    \begin{tikzpicture}[scale=1.5]
        \coordinate (A) at (0,0);
        \coordinate (B) at (2,0);
        \coordinate (C) at (1,1);
        \draw[thick] (A) -- (B) -- (C) -- cycle;
        \draw node [left, below] at (A) {(0,0)};
        \draw node [right, below] at (B) {(1,0)};
        \draw node [above] at (C) {($\frac{1}{2}, \frac{1}{2}$)};
    \end{tikzpicture}
    \hspace{3cm}
    \begin{tikzpicture}[scale=1.5]
    \coordinate (A) at (0,0);
    \coordinate (B) at (2,0);
    \coordinate (C) at (2,2);
    \coordinate (D) at (3,1);
    \coordinate (E) at (1,1);
    \draw[thick] (A) -- (B) -- (D) -- (C) -- cycle;
    \draw[thick] (E) -- (B);
    \draw[thick] (B) -- (C);
    \coordinate (1) at (1,1/2);
    \coordinate (2) at (3/2,1);
    \coordinate (3) at (5/2,1);
    \draw node at (1) {$\bullet$};
    \draw node at (2) {$\bullet$};
    \draw node at (3) {$\bullet$};
    \draw[thick] (1) -- (2) node [midway, above] {1};
    \draw[thick] (2) -- (3) node [midway, above] {2};
    \end{tikzpicture}
    \caption{On the left, we have the fundamental alcove of type $B_2$. On the right, we have an alcoved polytope of type $B_2$ and the graph of its alcoved triangulation. The Ehrhart series of the square is $\frac{1+z+z^2}{(1-z)^2 (1-z^2)}$.}
    \label{fig:B22}
\end{figure}

\begin{figure}[H]
    \centering
    \begin{tikzpicture}
    \coordinate (A) at (0,0);
    \coordinate (B) at ($(0,{-2*tan(60)})$);
    \coordinate (C) at ($(B) - (2,0)$);

    \draw[thick] (A) -- (B) -- (C) --cycle;
    \draw node [above] at (A) {$(0,0,0)$};
    \draw node [right] at (B) {$(\frac{1}{2},0,-\frac{1}{2})$};
    \draw node [left] at (C) {$(\frac{2}{3}, -\frac{1}{3}, -\frac{1}{3})$};
    \end{tikzpicture}
    \hspace{3cm}
    \begin{tikzpicture}
    % Define the vertices of the triangle
    \coordinate (A) at (0,0);  % Right-angle vertex
    \coordinate (B) at (2,0);
    \coordinate (C) at ($(A) - (0,{2*tan(60)})$); % Hypotenuse forms a 30-60-90 triangle

    \coordinate (D) at (-2,0);
    \coordinate (E) at ($(C) - (2,0)$);

    % Draw the triangle
    \draw[thick] (A) -- (B) -- (C) -- (E) -- (D) --cycle;

    \draw[thick] (E) -- (A) -- (C);

    \coordinate (F) at ($ (A)!.333!(B)!.333!(C) $);
    \coordinate (G) at ($ (A)!.333!(E)!.333!(C) $);
    \coordinate (H) at ($ (A)!.333!(D)!.333!(E) $);

    \draw[thick] (F) -- (G) node [midway, above] {3};
    \draw[thick] (G) -- (H) node [midway, above] {2};
    \draw node at (F) {$\bullet$};
    \draw node at (G) {$\bullet$};
    \draw node at (H) {$\bullet$};
    \end{tikzpicture}
    \caption{On the left, we have the fundamental alcove of type $G_2$. On the right, we have an alcoved polytope of type $G_2$. The Ehrhart series of the trapezoid is $\frac{1+z^2+z^3}{(1-z)(1-z^2)(1-z^3)}$.}
    \label{fig:G22}
\end{figure}
After proving this result, we show a relationship between this formula and another formula for the $h^*$-polynomial of the second hypersimplex 
$$\Delta_{2,n}=\{(x_1,\dots,x_n)\in[0,1]^n~|~\sum_{i=1}^nx_i=2\}$$ 
which is given in terms of combinatorial objects called decorated ordered set partitions. 
The only known proofs of the latter formula simply enumerate decorated ordered set partitions and show that the number of these objects gives the $h^*$ coefficients; our proof gives a bijective reason for why decorated ordered set partitions appear in the $h^*$-polynomial of the second hypersimplex.

\subsection{Organization}
In \cref{sec:prelim}, we cover relevant background materials, including root systems, alcoved polytopes, and Coxeter groups.
In \cref{sec:ehr}, we review some general Ehrhart theory of rational polytopes that are relevant to alcoved polytopes and prove the lemmas that will be used to show our main result.
Given that Coxeter complexes characterize alcoves, in \cref{sec:shell}, we review Coxeter complexes and the notion of a convex subset of a Coxeter group. In this section, we also describe a shelling order of the Coxeter complex given by \cite{bjorner} and prove a corollary about shelling subcomplexes of the Coxeter complex (\cref{cor:cox}) which will be used to show our main result.
We precisely state and prove our main result (\cref{thm:main}) in \cref{sec:proof}.
In \cref{sec:2n}, we recall a formula for the $h^*$-polynomial of the hypersimplex $\Delta_{k,n}$ in terms of hypersimplicial decorated ordered set partitions \cite{Kimh*} and show a connection with the formula yielded by our main theorem in the case $k=2$.

%Lastly, we compute the $S_n$-equivariant Ehrhart theory for the hypersimplices.
\vspace{.5cm}
\acknowledgements{We thank Alex Postnikov for suggesting this topic. We thank Lauren Williams for the helpful discussions and comments on the manuscript. We thank Nick Early for helpful discussions.}

\section{Preliminaries}\label{sec:prelim}

In this section, we recall the relevant background for \emph{root systems} which will be used to define \emph{alcoved polytopes}.
We recall notations from \emph{Coxeter groups} which will be used to describe our shellings of the Coxeter complexes related to the alcove triangulation of an alcoved polytope.
We follow the conventions in \cite{humphreys} and \cite{alcove2}.

\subsection{Root systems}\label{sec:rootsys}

Let $V$ be a real Euclidean space of rank $n$ with nondegenerate symmetric inner product $(\cdot,\cdot)$.
Let $\Phi \subset V$ be an irreducible \emph{crystallographic root system} with a choise of basis of \emph{simple roots} $\alpha_1, \dots, \alpha_n$.
Let $\Phi^+ \subset \Phi$ be the corresponding set of \emph{positive roots}.
The \emph{coweight lattice} $\Lambda^\vee$ is the integer lattice defined by $\Lambda^\vee = \Lambda^\vee(\Phi) = \{ \lambda \in V \mid (\lambda, \alpha) \in \Z, \text{ for all } \alpha \in \Phi\}$.
Let $\omega_1, \dots, \omega_n \subset V$ be the basis dual to the basis of simple roots, i.e., $(\omega_i, \alpha_j) = \delta_{ij}$.
The $\omega_i$ are called the \emph{fundamental coweights}.
They generate the coweight lattice $\Lambda^\vee$.

Let $\rho = \omega_1 + \cdots + \omega_n$. The \emph{height} of a root $\alpha$ is the number $(\rho, \alpha)$ of simple roots that add up to $\alpha$.
Since we assumed that $\Phi$ is irreducible, there exists a unique \emph{highest root} $\theta \in \Phi^+$ of maximal possible height.
For convenience we set $\alpha_0 = -\theta$.
Let $a_0 = 1$ and $a_1, \dots, a_n$ be the positivie integers given by $a_i = (\omega_i, \theta)$.
%The \emph{dual Coxeter number} is defined as $h^\vee = (\rho, \theta) + 1 = a_0 + a_1 + \cdots + a_n$.

\subsection{Alcoved polytopes}\label{sec:alcove}

Let $\Phi\subset V$ be a crystallographic root system.  The set of affine hyperplanes of the form  $H_{\alpha,k} = \{\lambda \in V \mid (\lambda, \alpha) = k \}$, where $\alpha\in\Phi^+,k\in\mathbb{Z}$,  divides $V$ into \emph{open alcoves}:
\begin{definition}
An (open) \emph{alcove} is the set
$$ A = \{ \lambda \in V \mid m_\alpha < (\lambda, \alpha) < m_\alpha+1, \text{ for } \alpha \in \Phi^+\} $$
where $m_\alpha$ is a collection of integers associated with the alcove $A$.
A \emph{closed alcove} is the closure of an alcove.
\end{definition}

\begin{definition}\label{def:fundamental}
The \emph{fundamental alcove} is the simplex given by
\begin{align*}
    A_\circ &= \{ \lambda \in V \mid 0 < (\lambda, \alpha) < 1, \text{ for } \alpha \in \Phi^+\} \\
    &= \text{Convex Hull of the points } 0, \omega_1/a_1, \dots, \omega_r/a_r.
\end{align*}
\end{definition}

% \begin{lemma}[Lemma 3.4, \cite{alcove2}]
%     A bounded subset $P \subset V$ is an alcoved polytope if and only if $P$ is the intersection fo several half-spaces of the form $\{\lambda \in V \mid (\lambda, \alpha) \geq k\}$, for $\alpha \in \Phi$ and $k \in \Z$.
% \end{lemma}

The \emph{affine Weyl group} $W_{\aff}$ associated with the root system $\Phi$ is generated by the reflections $s_{\alpha,k}: V \to V, \alpha \in \Phi, k \in \Z$, with respect to the affine hyperplanes $H_{\alpha,k}$, and acts simply transitively on the collection of all alcoves \cite{humphreys}.  In particular, the closure of each alcove has the same Ehrhart series, and we can use alcoves as the building blocks for a family of polytopes with natural triangulations in which all top-dimensional simplices have equal volume:

\begin{definition}
An \textit{alcoved polytope} is a polytope which is the union of some collection of faces of alcoves.  A \textit{proper alcoved polytope} is a polytope which is a union of alcoves, equivalently a top-dimensional alcoved polytope.
\end{definition}

In other words, every alcoved polytope is of the form
$$ P = \{ \lambda \in V \mid k_\alpha \leq (\lambda, \alpha) \leq K_\alpha, \text{ for } \alpha \in \Phi^+ \}, $$
where $k_\alpha, K_\alpha$ are two collections of integers indexd by the positive roots $\alpha \in \Phi^+$.

\begin{definition}\label{def:graphlabel}
Let $P$ be an alcoved polytope. We associate a graph $\Gamma_P = (V,E)$ with labeled edges to the alcoved triangulation of $P$. 
We will abuse notation and also use $\Gamma_P$ to denote the simplicial complex of the alcove triangulation of $P$.
The vertex set $V$ consists of closed alcoves in $P$, and the edge set $E$ consists of $(A, A')$ if $A$ and $A'$ share a common facet.

Let $(A, A')$ be an edge of $G$, and let $F = A \cap A'$ be the facet it represents. Then $F$ can be transformed to a facet $F_\circ$ of the fundamental alcove $A_\circ$ under the action of the affine Weyl group. Let $\omega_i/a_i$ be the vertex of $A_\circ$ that does not belong to $F_\circ$. Then $(A,A')$ has \emph{weight} $\ell_i$, denoted $\wt((A,A')) = \ell_i$, in which $\ell_i$ is the least common multiple of the denominators in $\omega_i/a_i\in\mathbb{Q}^n$ for $i=1,\dots,n$ and $\ell_0 = 1$.
\end{definition}

\subsection{Coxeter Groups}
For a root system $\Phi$, the affine Weyl group $W_\aff$ is an example of a Coxeter group:
\begin{definition}
Let $S$ be a set. A matrix $m: S \times S \to \{1,2,\dots,\infty\}$ is called a \emph{Coxeter matrix} if it satisfies
\begin{align*}
    m(s,s') &= m(s',s) \\
    m(s,s') = 1 & \iff s = s'.
\end{align*}
A Coxeter matrix $m$ determines a group $W$ with the presentation 
$$ \langle s \in S \mid (ss')^{m(s,s')} = e \text{ for all } m(s,s') \neq \infty \rangle, $$
where $e$ is the identity element.
The pair $(W,S)$ is called a \emph{Coxeter system} and 
the group $W$ is the \emph{Coxeter group}.
\end{definition}
\begin{definition}
Let $(W, S)$ be a Coxeter system. Each element $w \in W$ can be written as a product of generators $w = s_1 s_2 \cdots s_k$ for $s_i \in S$. If $k$ is minimal among all such expressions for $w$, then $k$ is called the \emph{length} of $w$ (written $\ell(w) = k$) and the word $s_1 s_2 \cdots s_k$ is called a \emph{reduced word} for $w$.
\end{definition}

\begin{definition}
Let $(W, S)$ be a Coxeter system, and let $u, v \in W$. Define the \emph{right weak order} as the partial order $\leq$ on $W$ such that $u \leq w$ if and only if $w = u s_1 s_2 \cdots s_k$ for some $s_i \in S$ such that $\ell(u s_1 s_2 \cdots s_i) = \ell(u) + i$ for all $0 \leq i \leq k$.
\end{definition}

\begin{definition}
Let $W$ be a Coxeter group. For $w \in W$, define the \emph{descent set} $\D(w) = \{s \in S \mid w s < w\}$.
For $J \subseteq S$, let $W_J$ be the subgroup of $W$ generated by the set $J$,
and let $W^J = \{ w \in W \mid w s > w \text{ for all } s \in J\}$ be a system of distinct coset representatives modulo $W_J$.
\end{definition}

\section{Ehrhart series of half-open rational simplices}\label{sec:ehr}

A key ingredient of the proof of our main theorem is to write an alcoved polytope $P$ as a disjoint union of half-open simplices, or simplices with some facets removed.  Since the number of lattice points in $P_1\sqcup P_2$ is the sum of lattice points in $P_1$ and $P_2$, it suffices to look at the Ehrhart series of these \emph{half-open} simplices.  If we remove no facets from an alcove, we may use the following theorem to write its Ehrhart series:

\begin{theorem}[Theorem 1.3, \cite{stanley80}]
\label{rat-simp}
Suppose $A$ is a $k$-simplex in $\R^m$ with rational vertices $\beta_0, \dots, \beta_k$. Let $\ell_i$ be the least positive integer $t$ for which $t\beta_i \in \Z^m$.
Consider the $(k+1) \times (m+1)$-matrix whose rows are the vectors $(\ell_i \beta_i, \ell_i)$.
If the greatest common divisor of all $(k+1)\times(k+1)$ minors of the matrix is equal to 1, then the Ehrhart series of the simplex $S$ is equal to
$$ \Ehr(A, z) = \frac{1}{\prod_{i=0}^k (1-z^{\ell_i})}. $$
\end{theorem}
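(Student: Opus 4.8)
The plan is to lift $S$ to the cone over it, apply the standard half-open parallelepiped decomposition, and then observe that the $\gcd$ hypothesis is exactly what makes the numerator trivial.

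First I would pass to the cone $C=\mathrm{cone}\{(\beta_0,1),\dots,(\beta_k,1)\}\subseteq\R^{m+1}$, where the last coordinate records the dilation parameter. Slicing $C$ at integer height $t\ge 0$ returns exactly $tS$, so $\Ehr(S,z)=\sum_{(x,t)\in C\cap\Z^{m+1}}z^{t}$. Since $\beta_0,\dots,\beta_k$ are affinely independent, the vectors $(\beta_i,1)$ are linearly independent; and by the minimality in the definition of $\ell_i$, the first nonzero lattice point on the ray $\R_{\ge 0}(\beta_i,1)$ is exactly $v_i:=(\ell_i\beta_i,\ell_i)$, with $C=\{\sum_{i=0}^k\lambda_i v_i:\lambda_i\ge 0\}$.

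Next I would introduce the half-open parallelepiped $\Pi=\{\sum_{i=0}^k\mu_i v_i:0\le\mu_i<1\}$. Because the $v_i$ are linearly independent, every lattice point $p\in C\cap\Z^{m+1}$ has a unique expression $p=q+\sum_{i=0}^k n_i v_i$ with $n_i\in\Z_{\ge 0}$ and $q=p-\sum_i n_i v_i\in\Pi\cap\Z^{m+1}$. Reading off the last coordinate (on which $v_i$ evaluates to $\ell_i$) and summing geometric series in $\Z[[z]]$ yields
\[
\Ehr(S,z)=\frac{\sum_{q\in\Pi\cap\Z^{m+1}} z^{t(q)}}{\prod_{i=0}^k(1-z^{\ell_i})},
\]
where $t(q)$ is the last coordinate of $q$, so everything reduces to showing that $0$ is the only lattice point in $\Pi$. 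Let $L=\Z^{m+1}\cap\mathrm{span}_\R(v_0,\dots,v_k)$ and $L'=\Z v_0+\dots+\Z v_k$. Since $\Pi$ is a fundamental domain for the translation action of $L'$ on $\mathrm{span}_\R(v_0,\dots,v_k)$, we get $\#(\Pi\cap\Z^{m+1})=[L:L']$, and it suffices to show this index is $1$. Writing the rows $v_i$ in terms of a fixed $\Z$-basis of $L$ gives $M=CW$, where $W$ has that basis as its rows and $C$ is the integral change-of-basis matrix with $|\det C|=[L:L']$; since the rows of $W$ form a $\Z$-basis of a saturated sublattice, $W$ has a completion in $\GL_{m+1}(\Z)$, so the $\gcd$ of its maximal minors is $1$. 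By Cauchy--Binet each maximal minor of $M$ is $(\det C)$ times one of $W$, so the $\gcd$ of the maximal minors of $M$ equals $|\det C|=[L:L']$; the hypothesis then forces $[L:L']=1$, hence $\Pi\cap\Z^{m+1}=\{0\}$ and the numerator is $z^{0}=1$.

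The step carrying the real content is the last one: the identification of the $\gcd$ of the maximal minors of $M$ with the lattice index $[L:L']$ — essentially Smith normal form for $M$ together with the fact that a $\Z$-basis of the saturation $L$ extends to an integral basis of $\Z^{m+1}$, which is the one point that deserves care. The power-series bookkeeping and the cone/slicing dictionary in the earlier steps are routine by comparison.
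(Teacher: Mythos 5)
Your proof is correct. Note, however, that the paper does not prove this statement at all: it is imported verbatim as Theorem 1.3 of Stanley's \emph{Decompositions of rational convex polytopes} and used as a black box (it serves as the base case of Lemma \ref{lem:half-opensimplex}), so there is no in-paper argument to compare against. What you have written is a valid self-contained proof by the standard route: cone over the simplex, half-open fundamental parallelepiped $\Pi$ of the primitive ray generators $v_i=(\ell_i\beta_i,\ell_i)$, and the identity $\Ehr(S,z)=\bigl(\sum_{q\in\Pi\cap\Z^{m+1}}z^{t(q)}\bigr)/\prod_i(1-z^{\ell_i})$. The one step carrying real content is, as you say, the identification of the number of lattice points of $\Pi$ with the index $[L:L']$ and of that index with the gcd of the maximal minors of the matrix $M$; your argument there is sound --- $L$ is saturated because it is $\Z^{m+1}$ intersected with a subspace, so a $\Z$-basis of $L$ extends to one of $\Z^{m+1}$ and the gcd of the maximal minors of $W$ is $1$, and $\det(M_J)=\det(C)\det(W_J)$ for each column set $J$ since $C$ is square. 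One cosmetic remark: you should observe explicitly that $0\in\Pi\cap\Z^{m+1}$ always, so the conclusion is that $0$ is the \emph{unique} lattice point of $\Pi$ and the numerator is $z^0=1$; you do implicitly say this, and nothing is missing.
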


The above theorem serves as a base case in the proof of the following more general formula for half-open simplices:

\begin{lemma}\label{lem:half-opensimplex}
Suppose $A$ is a $k$-simplex in $\R^m$ with rational vertices $\beta_0, \dots, \beta_k$. Let $\ell_i$ be the least positive integer $t$ for which $t\beta_i \in \Z^m$.
Let $F_i$ be the facet of $A$ that does not contain the vertex $\beta_i$.
Let $\mathcal{F} \subseteq \{0, \dots, k\}$ label a subset of facets of $S$.
Then the Ehrhart series of $A$ with facets $\{F_i \mid i \in \mathcal{F}\}$ removed is
$$ \Ehr(A \setminus \bigcup_{i \in \mathcal{F}} F_i, z) = \frac{ \prod_{i \in \mathcal{F}} z^{\ell_i}}{\prod_{i=0}^k (1-z^{\ell_i})}. $$
\end{lemma}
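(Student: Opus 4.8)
The plan is to induct on $|\mathcal{F}|$, using \Cref{rat-simp} as the base case $\mathcal{F} = \emptyset$. For the inductive step, suppose $\mathcal{F} \neq \emptyset$ and fix some $j \in \mathcal{F}$. The key observation is a decomposition of the half-open simplex that peels off one facet at a time. Write $\mathcal{F}' = \mathcal{F} \setminus \{j\}$ and let $S' = S \setminus \bigcup_{i \in \mathcal{F}'} F_i$. Then $S'$ is the disjoint union of $S' \setminus F_j$ (the half-open simplex we want) and $S' \cap F_j$. Since the Ehrhart series is additive over disjoint unions of half-open polytopes, we get
\[
\Ehr(S' \setminus F_j, z) = \Ehr(S', z) - \Ehr(S' \cap F_j, z).
\]
By the inductive hypothesis (applied to the $k$-simplex $S$ with $|\mathcal{F}'| < |\mathcal{F}|$ facets removed), $\Ehr(S', z) = \prod_{i \in \mathcal{F}'} z^{\ell_i} \big/ \prod_{i=0}^k (1 - z^{\ell_i})$.

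The crux is to understand $S' \cap F_j$. Now $F_j = \conv(\beta_0, \dots, \widehat{\beta_j}, \dots, \beta_k)$ is itself a rational $(k-1)$-simplex whose vertices are a subset of those of $S$, and with the same associated integers $\ell_i$ (since $\ell_i$ depends only on $\beta_i$). The facets of $S$ other than $F_j$ intersect $F_j$ precisely in the facets of $F_j$: concretely, for $i \neq j$, the facet of $F_j$ not containing $\beta_i$ is $F_i \cap F_j$. Hence $S' \cap F_j = F_j \setminus \bigcup_{i \in \mathcal{F}'} (F_i \cap F_j)$ is a half-open $(k-1)$-simplex with the facets indexed by $\mathcal{F}'$ removed. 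We would like to apply the inductive hypothesis to it — but the induction as I set it up is on $|\mathcal{F}|$ for a fixed simplex, so I should instead set up the induction on the pair $(k, |\mathcal{F}|)$, or more cleanly induct on $|\mathcal{F}|$ with the statement quantified over all simplices of all dimensions simultaneously. With that framing, the inductive hypothesis gives
\[
\Ehr(S' \cap F_j, z) = \frac{\prod_{i \in \mathcal{F}'} z^{\ell_i}}{\prod_{i=0, i \neq j}^k (1 - z^{\ell_i})}.
\]
Substituting both expressions into the additivity relation and factoring out $\prod_{i \in \mathcal{F}'} z^{\ell_i} \big/ \prod_{i \neq j}(1 - z^{\ell_i})$ yields
\[
\Ehr(S' \setminus F_j, z) = \frac{\prod_{i \in \mathcal{F}'} z^{\ell_i}}{\prod_{i=0, i\neq j}^k (1-z^{\ell_i})} \left( \frac{1}{1 - z^{\ell_j}} - 1 \right) = \frac{\prod_{i \in \mathcal{F}'} z^{\ell_i}}{\prod_{i=0,i\neq j}^k(1-z^{\ell_i})} \cdot \frac{z^{\ell_j}}{1 - z^{\ell_j}} = \frac{\prod_{i \in \mathcal{F}} z^{\ell_i}}{\prod_{i=0}^k (1 - z^{\ell_i})},
\]
which is exactly the claimed formula, completing the induction.

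The main obstacle I anticipate is the bookkeeping needed to justify that $S' \cap F_j$ really is the half-open $(k-1)$-simplex $F_j \setminus \bigcup_{i \in \mathcal{F}'}(F_i \cap F_j)$ with the "same" labels — i.e., that intersecting with $F_j$ commutes with removing the other facets, that removed facets $F_i$ with $i \notin \{0,\dots,k\}$ don't arise, and that the integer $\ell_i$ attached to vertex $\beta_i$ of $F_j$ agrees with the one from $S$. One subtlety: when $\ell_j = 1$ (e.g. $\beta_j$ is already a lattice point), the factor $z^{\ell_j}/(1-z^{\ell_j})$ still behaves correctly, and when some $\ell_i$ coincide the formula is unaffected since we track facets by index, not by value; it is worth a sentence to note the argument is insensitive to such coincidences. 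There is also a mild degenerate case to mention: if $k = 0$ then $\mathcal{F} \subseteq \{0\}$, and removing the unique "facet" (the empty face) of a point means removing the point itself, giving Ehrhart series $0 = z^{\ell_0}/(1-z^{\ell_0}) \cdot$—well, here one should interpret the $k=0$, $\mathcal F=\{0\}$ case directly; alternatively one starts the induction at $k \geq 1$ and treats points separately. None of these is deep, but they need to be handled cleanly for the inductive structure to be airtight.
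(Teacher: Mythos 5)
Your proof is correct, but it takes a genuinely different route from the paper. The paper does a one-shot inclusion--exclusion over \emph{all} subsets $\mathcal{G}\subseteq\mathcal{F}$ of removed facets: each intersection $\bigcap_{i\in\mathcal{G}}F_i$ is a closed face of $S$ with vertex set indexed by $\{0,\dots,k\}\setminus\mathcal{G}$, so Theorem~\ref{rat-simp} gives its Ehrhart series, and after clearing denominators the alternating sum collapses via the identity $\sum_{\mathcal{G}\subseteq\mathcal{F}}(-1)^{|\mathcal{G}|}\prod_{i\in\mathcal{G}}(1-z^{\ell_i})=\prod_{i\in\mathcal{F}}z^{\ell_i}$. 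You instead peel off one facet at a time, writing $S'=(S'\setminus F_j)\sqcup(S'\cap F_j)$ and running a double induction on dimension and $|\mathcal{F}|$; your identification of $S'\cap F_j$ as the half-open $(k-1)$-simplex $F_j\setminus\bigcup_{i\in\mathcal{F}'}(F_i\cap F_j)$ with the inherited $\ell_i$'s is the right observation, and the telescoping algebra checks out. Your recursion is essentially an ``unrolled'' form of the paper's inclusion--exclusion: it avoids the separate algebraic identity at the cost of having to quantify the inductive statement over all simplices of all dimensions, and it needs Theorem~\ref{rat-simp} only as the $\mathcal{F}=\emptyset$ base case rather than for every face (though in both arguments one is implicitly assuming the gcd hypothesis of Theorem~\ref{rat-simp} for the relevant faces, a point the paper also glosses over).

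Two remarks on the caveats you raise. First, the degenerate case you flag ($k=0$ with $\mathcal{F}=\{0\}$) genuinely occurs in your recursion, but only when \emph{all} facets of $S$ are removed ($|\mathcal{F}|=k+1$); if $|\mathcal{F}|\le k$ the recursion terminates at $\mathcal{F}'=\emptyset$ before reaching it, so ``treat points separately'' does not suffice in general. The correct resolution is to interpret all Ehrhart series of half-open pieces via the cone over the simplex (removing a facet of $S$ removes the corresponding facet of the cone, which always contains the apex), under which the point-with-empty-facet-removed really does have series $z^{\ell_0}/(1-z^{\ell_0})$ and the additivity you invoke holds including the constant term. Note that the paper's own proof relies on exactly the same convention: its $\mathcal{G}=\mathcal{F}=\{0,\dots,k\}$ term assigns the empty face Ehrhart series $1$ (the apex), so neither argument is more rigorous here. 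Second, your observation that coincidences among the $\ell_i$ are harmless is correct and needs no further justification, since all bookkeeping is by vertex index.
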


\begin{proof}
Without loss of generality assume $\mathcal{F}=\{0,\ldots,m\}$.  We induct on $m$.  The base case of the induction is $m=0$, where the statement is true by Theorem \ref{rat-simp}.  By inclusion-exclusion and our inductive hypothesis,
\begin{align*}
    \Ehr\left(A\setminus\bigcup_{i \in \mathcal{F}} F_i, z\right) &=\Ehr(A,z)+\sum_{\mathcal{G}\subseteq\mathcal{F}}(-1)^{|\mathcal{G}|}\Ehr\left(\bigcap_{F\in\mathcal{G}}F,z\right)\\
    &=\frac{1}{\prod_{i=0}^k(1-z^{\ell_i})}+\sum_{\mathcal{G}\subseteq\mathcal{F}}\frac{(-1)^{|\mathcal{G}|}}{\prod_{i\in V(\mathcal{G})}(1-z^{\ell_i})}
\end{align*}
where $V(\mathcal{G})$ is the set of indices of vertices $\bigcap_{F\in\mathcal{G}}F$.  We have $V(\mathcal{G})=\{0,\ldots,k\}\setminus\mathcal{G}$. Clearing denominators,
\begin{align*}
     \Ehr\left(A\setminus\bigcup_{i \in \mathcal{F}} F_i, z\right) &=\frac{1}{\prod_{i=0}^k(1-z^{\ell_i})}\left[\sum_{\mathcal{G}\subseteq\mathcal{F}}(-1)^{|\mathcal{G}|}\prod_{i\in\mathcal{G}}(1-z^{\ell_i})\right].
\end{align*}
Using inclusion-exclusion again, the last sum is equal to $\prod_{i\in\mathcal{F}}z^{\ell_i}$.

\iffalse
Suppose $\F= \{i_1,\dots,i_m\}$.
Let $x_1, \dots, x_m$ be a set of variables. We have an algebraic identity
\begin{align}\label{eq:inc-exc}
    \prod_{i=1}^m (1-x_i) &= 1 - \sum_{i=1}^m x_i + \sum_{i < j} x_i x_j - \cdots + (-1)^m x_1\cdots x_m\\
    &= \sum_{r=0}^m (-1)^r \left(\sum_{I \in \binom{[m]}{r}} x_{I} \right)
\end{align}
where $x_I = \prod_{i \in I} x_i$.
Then we make a change of variable $z^{\ell_{i_j}} = 1-x_j$.
Now \cref{eq:inc-exc} has $\prod_{i \in \F} z^{\ell_i}$ on the left-hand side, and the whole inclusion-exclusion process on the faces of the simplex on the right-hand side, finishing the proof.
\fi
\end{proof}

\section{A shelling order}\label{sec:shell}

The goal of this section is to build up to a concrete decomposition of any alcoved polytope into half-open alcoves.  This is done via a \emph{shelling order}.  In particular, we relate the alcove structure to a simplicial complex called the coxeter complex and use existing results about the latter to define a shelling order.

We begin by recalling the notion of Coxeter complexes, following \cite{bjornerbrenti}.
\begin{definition}
An \emph{abstract simplicial complex} $\Delta$ on the vertex set $V$ is a collection $\Delta$ of finite subsets of $V$, called \emph{faces}, such that $x \in V$ implies $\{x\} \in \Delta$ and $F \subseteq F' \in \Delta$ implies $F \in \Delta$. 
The dimension of a face is $\dim F = |F| - 1$, and $\dim \Delta = \sup_{F \in \Delta} \dim F$.
A complex is \emph{pure $d$-dimensional} if every face is contained in some $d$-dimensional face.
In this case, we denote the set of $d$-dimensional faces (called facets) by $\mathscr{F}(\Delta)$.
Two facets $A, A'$ are \emph{adjacent} if $\dim(A \cap A') = d-1$.
If $F \in \Delta$, let $\bar{F}$ be the \emph{simplex} $\{E \mid E \subseteq F\}$.
\end{definition}

\begin{definition}
Let $(W,S)$ be a Coxeter system with $|S|<\infty$. 
We write $(s) := S \setminus \{s\}$, for $s \in S$, 
and $V:= \bigcup_{s \in S} W/W_{(s)}$ for the collection of all left cosets of all maximal parabolic subgroups.
The \emph{Coxeter complex} $\Delta(W, S)$ is by definition the pure $(|S|-1)$-dimensional simplicial complex on the vertex set $V$ with facets $C_w := \{w W_{(s)} \mid s \in S\}$, for $w \in W$.
For a face $F \in \Delta(W, S)$, define its \emph{type} $\tau(F) = \{s \in S \mid F \cap W/W_{(s)} \neq \emptyset\}$.
\end{definition}

Let $(W, S)$ be a Coxeter system and $u,v \in W$. A path from $u$ to $v$ is a sequence $u = w_0 \to w_2 \to \cdots \to w_s = v$ such that $w_{i+1} = w_i s$ for some simple reflection $s \in S$. A subset $K \subseteq W$ is called \emph{convex} if for every $u, v \in K$ we have that any shortest path from $u$ to $v$ lies in $K$.  This mirrors the notion of convexity for unions of closed alcoves:

\begin{proposition}[Prop. 3.5, \cite{alcove2}]\label{prop:convex}
Let $P$ be a bounded subset which is a union of closed alcoves. Then $P$ is a convex polytope if and only if one of the following conditions hold:
\begin{enumerate}[(1)]
    \item For any two alcoves $A, B \subset P$, any shortest path from $A= A_0 \to A_1 \to A_2 \to \cdots \to A_s = B$ lies in $P$. Here $A_i$ are alcoves and $A' \to A''$ means that the closure of the two alcoves $A'$ and $A''$ share a facet.
    \item The subset $W_P = \{w \in W_\aff \mid w(A_\circ) \subset P\}$ of the affine Weyl group is a convex subset.
\end{enumerate}
\end{proposition}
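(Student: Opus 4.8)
The plan is to split the triangle of equivalences into a formal translation giving (1)$\iff$(2) and the genuinely geometric equivalence between ``$P$ is a convex polytope'' and (1). For the formal part I would fix the fundamental alcove $A_\circ$, let $S=\{s_0,\dots,s_n\}$ be the reflections in its walls (so $(W_{\aff},S)$ is a Coxeter system), and use that $W_{\aff}$ acts simply transitively on alcoves to get a bijection $w\mapsto w(A_\circ)$ from $W_{\aff}$ onto the set of all alcoves. The neighbour of $wA_\circ$ across the $w$-image of the wall labelled $s$ is $wsA_\circ$, so two alcoves share a facet exactly when the corresponding elements differ by right multiplication by some $s\in S$; hence the alcove adjacency graph is the right Cayley graph of $(W_{\aff},S)$, galleries are words in $S$, and shortest galleries are geodesics. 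Under this dictionary $W_P$ is precisely the set of elements indexing the alcoves of $P$, and ``$W_P$ is convex'' in the sense of the definition preceding the Proposition is a verbatim restatement of (1); so (1)$\iff$(2) needs nothing further.

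\emph{$P$ a convex polytope $\Rightarrow$ (1).} Boundedness makes $P$ a union of finitely many closed alcoves, hence a polytope; since $P$ is convex each of its facets lies in a supporting hyperplane $H$, and being $(n-1)$-dimensional and tiled by faces of the constituent closed alcoves this facet contains a full alcove-facet, which lies in some arrangement hyperplane $H_{\alpha,k}$, forcing $H=H_{\alpha,k}$. Thus $P=\bigcap_j H_j^+$ is a finite intersection of closed half-spaces bounded by arrangement hyperplanes. I would then take alcoves $A,B\subseteq P$ and a shortest gallery $A=A_0\to\cdots\to A_s=B$ and invoke the standard fact that a gallery is shortest iff it crosses each arrangement hyperplane separating $A$ from $B$ exactly once and crosses no other (so $s=d(A,B)$ equals the number of walls separating $A$ from $B$). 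Since $A,B\in H_j^+$ the hyperplane $H_j$ does not separate them, so the gallery stays in $H_j^+$; intersecting over $j$ gives every $A_i\subseteq P$, which is (1).

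\emph{(1) $\Rightarrow$ $P$ a convex polytope.} I would show $[x,y]\subseteq P$ for all $x,y\in P$. Since interior points of alcoves of $P$ are dense in $P$, by a transversality argument I can pick $x_n\to x$, $y_n\to y$ with $x_n,y_n$ in interiors of alcoves $A,B\subseteq P$ and with $[x_n,y_n]$ generic, i.e. meeting the arrangement transversally, one hyperplane at a time, in relative interiors of facets. As $P$ is closed and $[x_n,y_n]\to[x,y]$ in the Hausdorff metric, it suffices to handle a single generic segment $[x_n,y_n]$: it passes through a gallery $A=C_0\to\cdots\to C_m=B$ crossing exactly the hyperplanes separating $x_n$ from $y_n$, each once, so $m$ equals the number of walls separating $A$ from $B$ and the gallery is shortest; by (1) each $C_i\subseteq P$, hence $[x_n,y_n]\subseteq\bigcup_i C_i\subseteq P$.

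\emph{Main obstacle.} Everything here is either the formal alcove/group dictionary or elementary point-set topology, except the one classical input used twice: shortest galleries are exactly the monotone wall-crossing sequences, and a generic segment realizes a shortest gallery. This rests on the deletion/exchange condition for the affine Weyl group (equivalently Tits' solution of the word problem) and is the only place nontrivial Coxeter structure is needed rather than a direct argument; a secondary delicate point is arranging, in the last step, that the approximating segments are simultaneously generic and have endpoints inside alcoves of $P$.
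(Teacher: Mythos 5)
The paper does not prove this proposition at all: it is imported verbatim as Proposition 3.5 of Lam--Postnikov's \emph{Alcoved polytopes II}, so there is no in-paper argument to compare against. Your proof is the standard one and I find it correct. The dictionary alcoves $\leftrightarrow$ elements of $W_{\aff}$, facet-adjacency $\leftrightarrow$ right multiplication by $s\in S$ is exactly what the paper itself records in Section 4, so (1)$\iff$(2) is indeed purely formal; and both geometric directions rest, as you say, on the single classical fact that the gallery distance between two alcoves equals the number of arrangement hyperplanes separating them, with minimal galleries crossing each such hyperplane exactly once and no others. Two small points you should tighten if you write this out in full. First, in the direction (1)$\Rightarrow$ ``convex polytope'' you only establish convexity; to get that $P$ is a poly\-tope, note that a compact convex set which is a finite union of polytopes has finitely many extreme points (all among the vertices of the constituent alcoves) and is therefore itself a polytope. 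Second, the existence of the simultaneously generic approximating segments deserves a sentence: only finitely many arrangement hyperplanes meet the bounded set $P$, so the non-generic directions form a finite union of proper affine conditions and a small perturbation of $(x_n,y_n)$ inside the (open, dense-in-$P$) union of open alcoves of $P$ suffices. Neither point is a gap in the idea, only in the write-up.
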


Let $(W, S)$ be a Coxeter system and let $\mathscr{F}(\Delta(W, S))$ denote the set of all facets of $\Delta(W, S)$.
By \cite[pp. 40-44]{bourbaki} and \cite[Chap. 2]{tits}, there is a bijection between $W$ and $\mathscr{F}(\Delta(W, S))$ given by $w \mapsto C_w$.
Two facets $C_w$ and $C_{w'}$ are adjacent if and only if $w' = ws$ for some $s \in S$.
By \cite{humphreys}, there is a bijection between $\mathscr{F}(\Delta(W, S))$ and the set of alcoves in the affine Coxeter arrangement.
This bijection maps $C_e$ to the fundamental alcove $A_\circ$ (see \cref{def:fundamental}), and maps $C_w$ to the alcove $A$ such that there is a shortest path $A_\circ \overset{s_1}{\to} A_1 \overset{s_2}{\to} \cdots \overset{s_k}{\to} A$ where $s_1 s_2 \cdots s_k$ is a reduced word for $w$.
The group $W$ acts on $\Delta(W, S)$ by left translation $w: v W_{(s)} \mapsto wv W_{(s)}$, and this action is \emph{type-preserving}, i.e., $\tau(w(F)) = \tau(F)$ for all faces $F \in \Delta(W, S)$.

The key property of Coxeter complexes of interest in this paper is that they have a \emph{shelling order}:
\begin{definition}\label{def:shelling}
Let $\Delta$ be a pure $d$-dimensional complex of at most countable cardinality. A \emph{shelling} of $\Delta$ is a linear order $A_1, A_2, A_3, \dots$ on the set of facets of $\Delta$ such that
$\bar{A_k} \cap \Delta_{k-1}$ is pure $(d-1)$-dimensional for $k = 2,3,\dots$, where
$\Delta_{k-1} = \bar{A}_1 \cup \cdots \cup \bar{A}_{k-1}$.
\end{definition}

Given a shelling, define the \emph{restriction} of a facet $A_k$ by 
$$\mathscr{R}(A_k) = \{x \in A_k \mid A_k - \{x\} \in \Delta_{k-1}\}.$$

\begin{theorem}[Theorem 2.1, \cite{bjorner}]
Let $(W, S)$ be a Coxeter system, $|S|<\infty$. Then any linear extension of the weak ordering of $W$ assigns a shelling order to the facets of $\Delta(W, S)$.
\end{theorem}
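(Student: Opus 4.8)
The plan is to verify the shelling condition of \cref{def:shelling} directly from the combinatorics of parabolic cosets. Fix the order $e = w_1, w_2, w_3, \dots$ given by a linear extension of the right weak order on $W$ (one of order type at most $\omega$ exists because in the weak order every element lies above only finitely many elements, and $e$, the unique minimum, comes first). Write $d = |S|-1$ and $\Delta_{k-1} = \bar C_{w_1}\cup\cdots\cup\bar C_{w_{k-1}}$. I will use the standard structure of $\Delta(W,S)$: its facets are the $C_w$; the $(d-1)$-faces (\emph{panels}) of $C_w$ are the sets $P_{w,s} := C_w\setminus\{wW_{(s)}\}$, one for each $s\in S$, and $P_{w,s}$ is a face of exactly the two facets $C_w$ and $C_{ws}$; and more generally the face of $C_w$ of type $J\subseteq S$ has vertex set $\{wW_{(s)} : s\in J\}$.

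The heart of the argument is to prove that for every $k\ge 2$,
$$\bar C_{w_k}\cap\Delta_{k-1} \;=\; \bigcup_{s\in\D(w_k)}\bar P_{w_k,s}.$$
Granting this, the right-hand side is a union of $(d-1)$-simplices, nonempty because $w_k\neq e$ forces $\D(w_k)\neq\emptyset$, hence pure $(d-1)$-dimensional, which is exactly the shelling condition (and as a bonus one reads off $\mathscr R(C_{w_k}) = \{w_kW_{(s)} : s\in\D(w_k)\}$). The inclusion $\supseteq$ is immediate: for $s\in\D(w_k)$ we have $w_ks <_R w_k$ in the weak order, so $w_ks = w_j$ for some $j<k$, and then $P_{w_k,s} = P_{w_j,s}$ is a face of $C_{w_j}\subseteq\Delta_{k-1}$ as well as of $C_{w_k}$.

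For $\subseteq$ I would argue by contraposition. Let $F$ be the face of $C_{w_k}$ of type $J$. If $J = S$ then $F = C_{w_k}$, which is a face of no earlier $C_{w_j}$, so $F\notin\Delta_{k-1}$; assume $J\subsetneq S$. Since the vertices of $F$ are $\{w_kW_{(s)} : s\in J\}$ and the $W$-action is type-preserving, $F$ is a face of $C_{w_j}$ iff $w_kW_{(s)} = w_jW_{(s)}$ for all $s\in J$, iff $w_j^{-1}w_k\in\bigcap_{s\in J}W_{(s)} = W_{S\setminus J}$; so $F$ appears in $\Delta_{k-1}$ iff some $w_j$ with $j<k$ lies in the coset $w_kW_{S\setminus J}$. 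Now suppose $\D(w_k)\subseteq J$, i.e. $\D(w_k)\cap(S\setminus J)=\emptyset$; then $w_k\in W^{S\setminus J}$ is the minimal-length representative of $w_kW_{S\setminus J}$, so by the standard parabolic identity $\ell(w_kv) = \ell(w_k)+\ell(v)$ for all $v\in W_{S\setminus J}$, whence $w_k\le_R w'$ for every $w'\in w_kW_{S\setminus J}$. Thus every element of that coset occurs at position $\ge k$ in the linear extension, so $F\notin\Delta_{k-1}$. Contrapositively, every $F\in\bar C_{w_k}\cap\Delta_{k-1}$ satisfies $\D(w_k)\not\subseteq\tau(F)$, and choosing $s\in\D(w_k)\setminus\tau(F)$ gives $F\subseteq P_{w_k,s}$, completing the proof.

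The main obstacle is not a single computation but marshalling the right structural facts about Coxeter complexes and parabolic quotients: the correspondence between faces of $C_w$ and cosets $wW_J$, the fact that each panel lies in exactly two facets, and the equivalences $w\in W^J \Leftrightarrow \D(w)\cap J=\emptyset$ together with $\ell(wv)=\ell(w)+\ell(v)$ for $v\in W_J$ (hence $w\le_R wv$). The genuinely load-bearing step is the contrapositive argument of the last paragraph: it is exactly there that we need the order to be a linear extension of the \emph{weak} order rather than an arbitrary enumeration of $W$, so that is where I would be most careful.
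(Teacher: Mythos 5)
Your proof is correct. Note that the paper does not prove this statement at all --- it is quoted from Bj\"orner's 1984 paper --- but your argument is essentially Bj\"orner's original one: identifying $\bar C_{w_k}\cap\Delta_{k-1}$ with $\bigcup_{s\in\D(w_k)}\bar P_{w_k,s}$ via the parabolic-quotient facts $w\in W^{J}\Leftrightarrow \D(w)\cap J=\emptyset$ and $\ell(wv)=\ell(w)+\ell(v)$ for $v\in W_{J}$, which also recovers the restriction formula $\mathscr{R}(C_w)=\{wW_{(s)}\mid s\in\D(w)\}$ stated immediately after the theorem.
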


In particular, Bj\"orner showed that the restriction of this shelling is 
$$ \mathscr{R}(C_w) = \{w W_{(s)} \mid s \in \D(w)\} .$$

\begin{corollary}\label{cor:cox}
Let $(W, S)$ be a Coxeter system, $|S|<\infty$. Let $\Gamma_P$ be the subcomplex of $\Delta(W, S)$ induced by a convex subset $P$ of the Coxeter group $W$ that contains the identity element $e$. Any linear extension of the weak order is a shelling of $\Gamma_P$.
\end{corollary}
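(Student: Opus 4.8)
The plan is to bootstrap from Bj\"orner's shelling of the full Coxeter complex $\Delta(W,S)$. The first point to establish is that a convex subset $P\ni e$ is automatically an \emph{order ideal} (down-set) of the right weak order on $W$: if $w\in P$ and $u\le w$ in the weak order, then by definition some reduced word for $w$ is obtained by appending letters to a reduced word for $u$, and reading off prefixes this produces a shortest path $e=w_0\to\cdots\to w_{\ell(u)}=u\to\cdots\to w_{\ell(w)}=w$ in $W$; since $e,w\in P$ and $P$ is convex, this path lies in $P$, so $u\in P$. In particular the given linear extension of the weak order on $P$ extends to a linear extension $\preceq$ of the weak order on all of $W$, and since $e$ is the global minimum, the facet $C_e$ comes first in $\Gamma_P$.

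The main step is to pin down, for each $w\in P$ with $w\ne e$, the ``old part'' $\bar C_w\cap\Gamma_{<w}$, where $\Gamma_{<w}$ is the union of the simplices $\bar C_{w'}$ over those $w'\in P$ that precede $w$. I claim it equals $\bigcup_{s\in\D(w)}\overline{C_w\setminus\{wW_{(s)}\}}$, the \emph{same} set of codimension-$1$ faces that form the old part of $C_w$ inside the full complex. For ``$\subseteq$'': because $\preceq$ extends the order on $P$, $\Gamma_{<w}$ is contained in the union of the $\preceq$-earlier facets of $\Delta(W,S)$, and in Bj\"orner's shelling the intersection of that union with $\bar C_w$ is, by his restriction formula $\mathscr{R}(C_w)=\{wW_{(s)}\mid s\in\D(w)\}$ together with the standard description of the old part of a facet in a shelling (the union of the codimension-$1$ faces obtained by deleting one restriction vertex), exactly $\bigcup_{s\in\D(w)}\overline{C_w\setminus\{wW_{(s)}\}}$. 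For ``$\supseteq$'': here is where the ideal property does the work. For each $s\in\D(w)$ we have $\ell(ws)<\ell(w)$, so $ws\lessdot w$ in the weak order, hence $ws\in P$ and $ws$ precedes $w$; a routine check with cosets $wW_{(t)}$ shows that the adjacent facet $C_{ws}$ contains the codimension-$1$ face $C_w\setminus\{wW_{(s)}\}$, so that face already lies in $\bar C_{ws}\subseteq\Gamma_{<w}$.

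Finally, for $w\ne e$ the descent set $\D(w)$ is nonempty, so $\bar C_w\cap\Gamma_{<w}$ is a nonempty union of codimension-$1$ faces of the simplex $\bar C_w$, hence pure of dimension $|S|-2$; since $\Gamma_P$ is pure $(|S|-1)$-dimensional (each $C_w$ is a facet of $\Delta(W,S)$), this is precisely the defining condition for a shelling. I expect the only real subtlety to be the interplay in the previous paragraph: a priori, restricting from $\Delta(W,S)$ to the subcomplex $\Gamma_P$ could shrink the old part of a facet and break its purity, and the reason this does not happen is exactly that every codimension-$1$ face of $C_w$ that was ``old'' in $\Delta(W,S)$ is shared with an adjacent facet $C_{ws}$ that, by the ideal property of $P$, still belongs to $\Gamma_P$ and still comes earlier. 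Everything else — the coset identifications and the standard fact about old parts of facets in shellings — is routine.
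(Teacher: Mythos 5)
Your proof is correct and follows essentially the same route as the paper's: restrict Bj\"orner's shelling of $\Delta(W,S)$ to the subcomplex, and use convexity together with $e\in P$ to show that for every descent $s\in\D(w)$ the earlier adjacent facet $C_{ws}$ still lies in $\Gamma_P$, so the old part of $C_w$ does not shrink and remains pure of codimension one. Your write-up is somewhat more explicit about the two containments and about why the linear extension on $P$ is compatible with one on all of $W$, but the key ideas coincide.
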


\begin{proof}
Since $P$ contains the identity and is convex, if $w \in P$, then all the shortest paths from $e$ to $w$ are contained in $P$. That is, all the reduced words of $w$ are contained in $P$. 
Let $C_1, C_2, \dots$ denote a shelling of $\Delta(W, S)$, and let $C_{a_1}, C_{a_2}, \dots$ denote the subsequence of $C_1, C_2, \dots$ consisting of facets that are in $\Gamma_P$.
For $k \geq 2$,
let $\Delta_{k-1} = \bar{C}_1 \cup \cdots \cup \bar{C}_{k-1}$ and let $\Delta'_{k-1} = \bar{C}_{a_1} \cup \cdots \cup \bar{C}_{a_{k-1}}$.
Then by definition, $\Delta'_{k-1} \subseteq \Delta_{k-1}$, so $C_{a_k} \cap \Delta'_{k-1} \subseteq C_{a_k} \cap \Delta_{{a_k}-1}$.
Suppose $C_{a_k} = C_w$ for some $w \in W$.
Since all the reduced words of $w$ are contained in $P$, 
for each $s \in \D(w)$, we have $C_w - \{ w W_{(s)} \} \in \Delta'_{k-1}$, so $C_{a_k} \cap \Delta'_{k-1} = C_{a_k} \cap \Delta_{{a_k}-1}$ is pure $(|S|-1)$-dimensional, concluding the proof.
\end{proof}

This corollary allows us to define \emph{breadth-first search order of $\Gamma$}, which is essential to proving our main result.

\begin{figure}
\centering
    % Set up a 3D coordinate system
    \tdplotsetmaincoords{60}{70}
    \begin{tikzpicture}[tdplot_main_coords, scale=5]

    % Define the vertices
    \coordinate (A) at (1, 0, 0);      % Vertex A
    \coordinate (B) at (1, 1, 0);      % Vertex B
    \coordinate (C) at (0.5, 0.5, 0);  % Vertex C
    \coordinate (D) at (1, 1, 1);      % Vertex D
    \coordinate (E) at (0.5, 0.5, 0.5);% Vertex E
    \coordinate (F) at (1.5, 0.5, 0);  % Vertex F
    \coordinate (G) at (1.5, 0.5, 0.5);% Vertex G
    \coordinate (H) at (1, 0.5, 0.5);

    % Label the vertices with their coordinates
    \node[left,below] at (A) {$(1, 0, 0)$};
    \node[right] at (B) {$(1, 1, 0)$};
    \node[left] at (C) {$(\frac{1}{2}, \frac{1}{2}, 0)$};
    \node[right,above] at (D) {$(1, 1, 1)$};
    \node[above] at (E) {$(\frac{1}{2}, \frac{1}{2}, \frac{1}{2})$};
    \node[right,below] at (F) {$(\frac{3}{2}, \frac{1}{2}, 0)$};
    \node[right] at (G) {$(\frac{3}{2}, \frac{1}{2}, \frac{1}{2})$};

    % Draw the edges
    \draw[thick] (A) -- (G) -- (D) -- (E) -- cycle;
    \draw[thick] (B) -- (D);
    \draw[thick] (A) -- (F) -- (B);
    \draw[thick,dotted] (B) -- (C);
    \draw[thick,dotted] (A) -- (C);
    \draw[thick,dotted] (E) -- (C);
    \draw[thick] (G) -- (F);

    \draw[thick] (A) -- (D);
    \draw[thick] (E) -- (G);
    \draw[thick] (G) -- (B);
    \draw[thick,dotted] (H) -- (B);
    \draw[thick,dotted] (A) -- (B);
    \draw[thick,dotted] (E) -- (B);
\end{tikzpicture}
\begin{tikzpicture}
\coordinate (A) at (0,-1);
\coordinate (B) at (1,0);
\coordinate (C) at (1,1);
\coordinate (D) at (3,0);
\coordinate (E) at (3,1);
\coordinate (F) at (4,-1);

\node at (A) {$\bullet$};
\node at (B) {$\bullet$};
\node at (C) {$\bullet$};
\node at (D) {$\bullet$};
\node at (E) {$\bullet$};
\node at (F) {$\bullet$};

\draw[thick] (A) -- (B) node[midway,above] {2};
\draw[thick] (B) -- (C) node[midway,left] {1};
\draw[thick] (D) -- (E) node[midway,right] {1};
\draw[thick] (C) -- (E) node[midway,above] {2};
\draw[thick] (B) -- (D) node[midway,below] {2};
\draw[thick] (D) -- (F) node[midway,above] {2};
\end{tikzpicture}
\caption{The generalized hypersimplex for $\Phi = B_3$ and $k = 2$. The Ehrhart series of $\Delta^{B_3}_2$ is $\Ehr(\Delta^{B_3}_2,z) = \frac{1+z+3z^2+z^3}{(1-z)^2 (1-z^2)^2}$}
\label{fig:B32}
\end{figure}

%definition not used elsewhere
\iffalse
\begin{definition}
Let $P$ be a subset of the Coxeter group $W$. For $w \in W$, define $\delta_P(w) = \{s \in S \mid w s \notin P\}$.
\end{definition}
\fi

\section{Proof of the main theorem}\label{sec:proof}

We can now translate the shelling order of subcomplexes of the Coxeter complex in \cref{cor:cox} into a shelling order of the alcoves of an alcoved polytope $P$, and use this shelling order to compute the Ehrhart series of $P$.
\begin{definition}\label{def:partialorder}
Let $\Gamma = (V,E)$ be an undirected graph, and let $v_0 \in V$ be an arbitrary vertex of $\Gamma$. Define the \emph{breadth-first search order of $\Gamma$ with root $v_0$} as the partial order $(\P_{v_0,\Gamma},\prec)$ on $V$ such that for two distinct vertices $u, v \in V$, $u \prec v$ if and only if there is a shortest path from $v_0$ to $v$ passing through $u$.
\end{definition}

The following is a corollary of \cref{cor:cox}.
\begin{corollary}\label{cor:shelling}
Let $P$ be an alcoved polytope and let $\Gamma_P = (V, E)$ be the graph of the alcoved triangulation of $P$. For any $v_0 \in V$, any linear extension of the partial order $(\P_{v_0,\Gamma_P},\prec)$ is a shelling order of the alcove triangulation of $P$.
\end{corollary}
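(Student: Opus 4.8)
The plan is to reduce \cref{cor:shelling} to \cref{cor:cox} by transporting the combinatorial data of the alcoved triangulation into the Coxeter complex of the affine Weyl group $W_\aff$. First I would use the dictionary set up in \cref{sec:shell}: the bijection $w \mapsto C_w$ between $W_\aff$ and the facets of $\Delta(W_\aff,S)$, combined with the bijection between those facets and the alcoves of the affine Coxeter arrangement, under which $C_e$ corresponds to the fundamental alcove $A_\circ$. Under this correspondence, the set of alcoves making up $P$ gets identified with the subset $W_P = \{w \in W_\aff \mid w(A_\circ)\subseteq P\}$, and by \cref{prop:convex} the hypothesis that $P$ is an alcoved polytope (a convex union of alcoves) means exactly that $W_P$ is a convex subset of $W_\aff$. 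Two alcoves share a facet precisely when the corresponding group elements differ by a right multiplication by a simple reflection, so the graph $\Gamma_P$ of the alcoved triangulation is isomorphic to the ``right Cayley graph'' structure on $W_P$, and the simplicial complex $\Gamma_P$ is exactly the induced subcomplex of $\Delta(W_\aff,S)$ on $W_P$.

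The one mismatch to handle is that \cref{cor:cox} requires the convex subset to contain the identity $e$, i.e.\ it produces a shelling rooted at $A_\circ$, whereas here we want to root the shelling at an arbitrary alcove $v_0 \in V$. I would fix this by choosing $w_0 \in W_\aff$ with $w_0(A_\circ)$ equal to the alcove $v_0$, and applying the left translation action of $w_0^{-1}$ on $\Delta(W_\aff,S)$. This action is type-preserving and simplicial, it sends $A_\circ$-complexes to $A_\circ$-complexes, and it sends $W_P$ to $w_0^{-1}W_P$, which is again convex (left translation is an automorphism of the Cayley graph preserving distances) and now contains $e$. Moreover left translation carries the right weak order bijectively to itself in a way compatible with the correspondence: $u \le_R v$ in $W_P$ iff $w_0^{-1}u \le_R w_0^{-1}v$, since right weak order is defined purely by right multiplication by generators together with length additivity relative to a basepoint. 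Hence a linear extension of the weak order on $w_0^{-1}W_P$ (which \cref{cor:cox} tells us is a shelling of the translated subcomplex) pulls back to a shelling of $\Gamma_P$.

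Finally I would check that the order coming out of this argument is precisely (a linear extension of) the breadth-first search order $(\P_{v_0,\Gamma_P},\prec)$ of \cref{def:partialorder}. The key point is that for $w \in W_P$, the length $\ell(w_0^{-1}w)$ — with $w_0^{-1}w$ viewed relative to the basepoint — equals the graph distance in $\Gamma_P$ from $v_0$ to the alcove corresponding to $w$, because shortest paths in the Cayley graph correspond to reduced words and, by convexity of $w_0^{-1}W_P$, every such reduced word stays inside $P$. So the weak order on $w_0^{-1}W_P$ refines exactly the relation ``$u \prec v$ iff some shortest $v_0$-to-$v$ path passes through $u$,'' which is the definition of $\prec$; any linear extension of the weak order is therefore a linear extension of $\prec$, and conversely. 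I expect the main obstacle to be bookkeeping the basepoint shift cleanly — making sure that ``weak order relative to $w_0$'' and ``BFS distance from $v_0$'' are genuinely the same partial order and that the translation argument doesn't secretly require $v_0 = A_\circ$ — rather than any deep new idea; the geometric content is entirely contained in \cref{prop:convex} and \cref{cor:cox}.
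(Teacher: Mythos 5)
Your proposal is correct and follows essentially the same route as the paper's own proof: identify the alcoves of $P$ with the convex subset $W_P\subseteq W_{\aff}$ via \cref{prop:convex}, left-translate by $w_0^{-1}$ so that the set contains the identity, and apply \cref{cor:cox}. The only difference is that you spell out the step the paper merely asserts --- that convexity forces shortest paths in $\Gamma_P$ to agree with reduced words, so the breadth-first search order rooted at $v_0$ coincides with the weak order on $w_0^{-1}W_P$ --- which is a welcome clarification rather than a departure.
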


\begin{proof}
Let $W = W_\aff$ be the affine Weyl group that acts on the alcoves in $P$. Let $W_P = \{w \in W \mid w(A_\circ) \subset \}$, which is convex by \cref{prop:convex}.
Let $v_0$ be the alcove $C_w$ for some $w \in W$. Then $w^{-1}(W_P)$ is a convex subset of $W$ that contains the identity, and $w^{-1}(\mathcal{P}_{v_0, \Gamma_P},\prec)$ is the weak order on the facets of $w^{-1}(\Gamma_P)$. 
By \cref{cor:shelling}, any linear extension of $w^{-1}(\mathcal{P}_{v_0, \Gamma_P},\prec)$ is a a shelling order of $w^{-1}(\Gamma_P)$.
Since $W$ acts transitively on the set of all alcoves,
we conclude the proof.
\end{proof}

\begin{lemma}\label{lem:disjointunion}
Let $P$ be an alcoved polytope.
Let $\Gamma_P = (V,E)$ be the graph of the alcoved triangulation of $P$.
Fix some $v_0 \in V$ and let $(\P_{v_0,\Gamma_P}, \prec)$ be the breadth-first search order on $\Gamma_P$ with root $v_0$.
For each alcove $A$ in $P$, let $\mathscr{I}_A=\{\bar{A} \cap \bar{A'} \mid A' \precdot A \}$ be a subset of facets of the closure $A$. 
Then the set of half-open alcoves $A^\circ = \bar{A} \setminus (\cup \mathscr{I}_A)$, are mutually disjoint, and their union is equal to $P$, i.e.,
$$ P = \bigsqcup_{A \in V} A^\circ. $$
\end{lemma}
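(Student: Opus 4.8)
The plan is to derive this as a direct consequence of the shelling order established in \cref{cor:shelling}, using the standard fact that the "new faces" contributed by each facet in a shelling partition the whole complex. First I would fix a linear extension $A_1, A_2, A_3, \dots$ of the partial order $(\P_{v_0,\Gamma_P}, \prec)$, which by \cref{cor:shelling} is a shelling order of the alcove triangulation $\Gamma_P$ (viewed as a pure $n$-dimensional simplicial complex). Recall that in a shelling, setting $\Delta_{k-1} = \bar{A}_1 \cup \cdots \cup \bar{A}_{k-1}$, each simplex $\bar{A}_k$ decomposes as a disjoint union $\bar{A}_k = (\bar{A}_k \setminus \Delta_{k-1}) \sqcup (\bar{A}_k \cap \Delta_{k-1})$, and since the order is a shelling, $\bar{A}_k \cap \Delta_{k-1}$ is the union of exactly those facets of $\bar{A}_k$ lying in $\mathscr{R}(A_k)$ (equivalently, the union of the facets $\bar{A}_k \cap \bar{A}_j$ over $j < k$ with $A_j$ adjacent to $A_k$). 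Since $\Gamma_P = \Delta_{|V|}$ is the whole complex, iterating gives the set-theoretic disjoint decomposition $P = \bigsqcup_k (\bar{A}_k \setminus \Delta_{k-1})$.

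The remaining work is to identify $\bar{A}_k \setminus \Delta_{k-1}$ with the half-open alcove $A_k^\circ = \bar{A}_k \setminus (\cup \mathscr{I}_{A_k})$, i.e. to show that removing the facets $\bar{A}_k \cap \bar{A}'$ for $A' \precdot A_k$ from $\bar{A}_k$ gives the same point set as removing the (generally smaller-dimensional) intersection $\bar{A}_k \cap \Delta_{k-1}$. For this I would argue that $\bar{A}_k \cap \Delta_{k-1}$ is precisely the union of the facets $\bar{A}_k \cap \bar{A}_j$ with $j < k$: any point of $\bar{A}_k$ in some earlier $\bar{A}_j$ lies in $\bar{A}_k \cap \bar{A}_j$, and since $A_j \ne A_k$ are distinct $n$-simplices of a triangulation, $\bar{A}_k \cap \bar{A}_j$ is a (possibly empty) face of $\bar{A}_k$, hence contained in a facet of $\bar{A}_k$ of the form $\bar{A}_k \cap \bar{A}_i$ for some neighbor $A_i$ of $A_k$ with $A_i$ among $A_1, \dots, A_{k-1}$. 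So it suffices to check that the neighbors $A_i$ of $A_k$ appearing before $A_k$ in the linear extension are exactly the neighbors with $A_i \precdot A_k$ in the breadth-first order; this is the content of how the BFS partial order interacts with adjacency — a neighbor $A_i$ of $A_k$ satisfies $A_i \prec A_k$ in $\P_{v_0,\Gamma_P}$ if and only if $A_i$ is strictly closer to $v_0$ (since adjacent alcoves differ in distance from $v_0$ by exactly $1$, there is a shortest $v_0$-to-$A_k$ path through $A_i$ iff $\dist(v_0,A_i)=\dist(v_0,A_k)-1$), and such $A_i$ necessarily come before $A_k$ in any linear extension, while neighbors that are farther or equidistant do not. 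Note also that when $A_i \precdot A_k$ the covering relation is automatic here because adjacent vertices at distance differing by one are always in a covering relation in the BFS order. This pins down $\mathscr{I}_{A_k}$ as exactly the set of facets of $\bar{A}_k$ contained in $\Delta_{k-1}$, giving $\bar{A}_k \setminus (\cup\mathscr{I}_{A_k}) = \bar{A}_k \setminus \Delta_{k-1}$.

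I would then assemble: $P = \bigsqcup_{A \in V} A^\circ$, with disjointness and covering both coming from the shelling decomposition above. The main obstacle is the bookkeeping in the middle step — carefully justifying that $\bar{A}_k \cap \Delta_{k-1}$ equals the union of codimension-one faces indexed by earlier neighbors (rather than some lower-dimensional sliver), and that "earlier in the linear extension and adjacent" coincides with "$\precdot$ in $\P_{v_0,\Gamma_P}$". Both reduce to the elementary graph-distance fact that adjacent alcoves lie at distances from $v_0$ differing by exactly $1$, together with the definition of a shelling; once that is in hand the rest is formal. One subtlety worth stating explicitly is that the decomposition a priori depends on the chosen linear extension, but the resulting family $\{A^\circ\}$ depends only on the partial order $\P_{v_0,\Gamma_P}$, since $\mathscr{I}_A$ is defined purely in terms of the covering relations of that partial order; so any valid linear extension yields the same $A^\circ$, which is what makes the statement well-posed.
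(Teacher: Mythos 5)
Your proposal is correct and follows essentially the same route as the paper: both decompose $P$ by peeling off half-open alcoves along the shelling order of \cref{cor:shelling} and identify the facets removed from each alcove with those shared with its covers in the breadth-first search order. The paper packages this as an induction along a linear extension of $\prec$, while you invoke the standard shelling partition directly and additionally spell out (via the fact that adjacent alcoves lie at distances from $v_0$ differing by exactly one, which ultimately comes from the length function on the affine Weyl group rather than from general graph theory) why earlier neighbors coincide with covers --- a detail the paper leaves implicit.
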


\begin{proof}
We prove this relationship by induction. Let $A \in V$ be an alcove in $P$. Let $\mathcal{A} = \{A' \in V \mid A' \prec A \}$ be the set of alcoves that comes before $A$ in the topological sort of $\Gamma_P$.
We show that
\begin{enumerate}[(1)]
    \item $A^\circ \cap \bigcup_{A' \in \mathcal{A}} A'^{\circ} = \emptyset$; \label{disjoint}
    \item $A^\circ \cup \bigcup_{A' \in \mathcal{A}} A'^{\circ} = \bar{A} \cup (\bigcup_{A' \in \mathcal{A}} \bar{A'})$.\label{union}
\end{enumerate}

To show (\ref{disjoint}), assume there exists $A' \in \mathcal{A}$ such that $A^\circ \cap A'^{\circ} \neq \emptyset$. 
Then $\bar{A} \cap \bar{A}' \neq \emptyset$ and it must be contained in a facet of $\bar{A}$ in $\bar{A} \cap \bigcup_{A' \in \mathcal{A}} \bar{A}'$ because $\prec$ is a shelling.
This facet is equal to $\bar{A} \cap \bar{A}'$ for $A' \precdot A$, and therefore will be excluded in $A^\circ$.

To show (\ref{union}), we first use the induction hypothesis that $\bigcup_{A' \in \mathcal{A}} A'^{\circ} = \bigcup_{A' \in \mathcal{A}} \bar{A}'$. 
Then, since $\cup \mathscr{I}_A \subset \bigcup_{A' \in \mathcal{A}} \bar{A}'$, we have
$A^\circ \cup (\bigcup_{A' \in \mathcal{A}} \bar{A}' ) = \bar{A} \cup (\bigcup_{A' \in \mathcal{A}} \bar{A}' ) $.
\end{proof}

We now have all the ingredients we need to compute the Ehrhart polynomial of an arbitrary alcoved polytope:
\begin{theorem}\label{thm:main}
Let $P$ be an alcoved polytope and let $\Gamma_P = (V, E)$ be the edge-weighted graph of its alcoved triangulation (see \cref{def:graphlabel} for details). 
Given any $v_0 \in V$, let $\mathcal{P}_{v_0,\Gamma_P}$ be the breadth-first search order of $\Gamma_P$ with root $v_0$ (see \cref{def:partialorder}).
The Ehrhart series of $P$ is equal to 
$$\Ehr(P,z) = \frac{\sum_{v \in V} z^{\wt(v)}}{\prod_{i=0}^n (1-z^{\ell_i})}$$
where $\wt(v) = \sum_{u \precdot v} \wt((u,v))$ is the sum of the weights of the edges between $v$ and the elements it covers.
\end{theorem}

\begin{proof}[Proof of \cref{thm:main}]
By \cref{lem:disjointunion} and \cref{lem:half-opensimplex} and the \emph{additivity} of Ehrhart series, we conclude the proof.
\end{proof}

\begin{example}
Consider the $n$-dimensional hypercube $\cube_n=[0,1]^n$. The graph of alcoved triangulation of a hypercube is the weak Bruhat graph of the symmetric group $\mathcal{S}_n$. Therefore, $h^*(\cube_n,z) = \sum_{w \in \mathcal{S}_n} z^{\des(w)}$, which is the Eulerian polynomial. This is a well known result from \cite{stanley80}.
\end{example}

\section{The hypersimplex $\Delta_{2,n}$}\label{sec:2n}
We now relate our shelling order formula and a combinatorial formula for a well-studied polytope.

The hypersimplex $\Delta_{k,n}$ is the subset of $[0,1]^n\subset\mathbb{R}^n$ consisting of points $\{(x_1,\dots,x_n) \mid x_1 + \cdots + x_n = k\}$. 
Under the linear transformation
$$ y_i = x_1+ \cdots + x_i, $$
the hypersimplex $\Delta_{k,n}$ can be realized as an alcoved polytope defined by $0 \leq y_i - y_{i-1} \leq 1$ and $y_n = k$ for all $i =1,\dots,n$ with the convention $y_0 = 0$. Let $\Gamma_{2,n}$ be the graph of alcoved triangulation of the hypersimplex $\Delta_{2,n}$ (see \cref{def:graphlabel}). In this section we will appeal to the following characterization of the alcove triangulation of $\Delta_{k,n}$:
\begin{theorem}[\cite{alcove1}]
\label{thm:alcove perm description}
    %\begin{enumerate}
    The alcoves of $\Delta_{k,n}$ are in bijection with permutations $w\in \mathcal{S}_n$ modulo cycle shifts (ie. $[w_1,\ldots ,w_n]~[w_n,w_1,\ldots ,w_{n-1}]$) such that if we take the representative of $w$ with $w_n=n$, $w^{-1}$ has $k-1$ descents.  We write $(w)=(w_1,\ldots,w_n)$ to denote the corresponding long cycle in $\mathcal{S}_n$, and $\Delta_{(w)}$ to denote the corresponding alcove in $\Delta_{k,n}$.  Then $\Delta_{(u)}$ and $\Delta_{(w)}$ are adjacent in $\Gamma_{k,n}$ if and only if there exists $i\in[n]$ such that $u_i-u_{i+1}\neq \pm 1 (\text{mod }n)$ and the cycle $(w)$ is obtained from $(u)$ by switching the positions of $u_i$ and $u_{i+1}$.
    
    %\item The vertices of $\Delta_{(w)}$ may be ordered as $v^{(1)},\ldots,v^{(n)}$ so that $v^{(i+1)}=v^{(i)}+e_{w_i+1}-e_{w_i}$ for all $i\in [n]$ with indices considered modulo $n$.  This interpretation is called the \emph{circuit triangulation} of $\Delta_{k,n}$.
%\end{enumerate}
\end{theorem}

The coefficients of the $h^*$-polynomial of the hypersimplex $\Delta_{k,n}$ were proved to be enumerated by \emph{hypersimplicial decorated ordered set partitions}.

\begin{definition}[\cite{early2017conjectures}]
A \emph{decorated ordered set partition} (DOSP) $((S_1)_{s_1},\dots,(S_p)_{s_p})$ of type $(k,n)$ consists of an ordered set partition $(S_1, \dots, S_p)$ of $[n]$ and a $p$-tuple of integers $(s_1, \dots, s_p) \in \Z^{p}$ such that $\sum_{i=1}^p s_i = k$ and $s_i \geq 1$. We regard them up to cyclic rotation, so 
$$((S_1)_{s_1},(S_2)_{s_2},\dots,(S_p)_{s_p})$$ is the same as $$((S_2)_{s_2},\dots,(S_p)_{s_p},(S_1)_{s_1}).$$ A decorated ordered set partition is \emph{hypersimplicial} if $1 \leq s_i \leq |S_i|-1$ for all $i$.
We denote the set of hypersimplicial decorated ordered set partitions of type $(k,n)$ by $\OSP(\Delta_{k,n})$.

We call each $S_i$ a block and place them on a circle in the clockwise fashion then think of $s_i$ as the clockwise distance between adjacent block $S_i$ and $S_{i+1}$. The \emph{winding vector} of a decorated ordered set partition is an $n$-tuple of integers $(l_1,\dots,l_n)$ such that $l_i$ is the distance of the path starting from the block containing $i$ to the block containing $(i+1)$ moving clockwise. If $i$ and $(i+1)$ are in the same block then $l_i = 0$.
If $l_1 + \cdots + l_n = kd$, then we define the \emph{winding number} to be $d$.
\end{definition}

\begin{figure}
    \centering
\begin{tikzpicture}[scale=3,
  every node/.style={fill=white, inner sep=1pt, font=\small},
  arc/.style={thick, ->, >=Stealth}]

  % Define color-blind friendly colors (Okabe–Ito palette)
  \definecolor{cbblue}{HTML}{0072B2}
  \definecolor{cborange}{HTML}{E69F00}
  \definecolor{cbgreen}{HTML}{009E73}

  % Draw the base circle
  \draw[thick] (0,0) circle (1);

  % Place 8 nodes (elements) equally around the circle
\node (P1) at (0:1) {$\{1,5,6\}_{2}$};

\node (P2) at (360/7:1) {$\{2,9\}_{1}$};

\node (P3) at (2*360/7:1) {$\{4,10,12\}_{1}$};

\node (P4) at (5*360/7:1) {$\{3,7,8,11\}_{3}$};

\node at (180/7:1.1) {$1$};
\node at (540/7:1.1) {$1$};
\node at (180:1.1) {$3$};
\node at (-360/7:1.1) {$2$};

\foreach \angle in {30, 80, 180, 315} {
    % Compute start and end points of the arrow
    \coordinate (A) at ({cos(\angle)}, {sin(\angle)});
    \coordinate (B) at ({cos(\angle-5)}, {sin(\angle-5)}); % Small offset for direction

    % Draw arrow pointing clockwise (along the circle)
    \draw[->, thick] (A) -- (B); }
\end{tikzpicture}
\caption{The winding vector of $((1,5,6)_2,(3,7,8,11)_3,(4,10,12)_1,(2,9)_1)$ is (6,3,3,2,0,2,0,4,6,4,3,2) and the winding number is 35/7=5.
The $i$-th entry of the winding vector is the circular distance between $i$ and $i+1$ in clockwise direction. 
One can walk from 1 to 2 to \dots $n$ back to 1 in clockwise direction, and the winding number is the number of times that one walks around the circle.}
\label{fig:DOSP}
\end{figure}

\begin{theorem}[\cite{Kimh*}]
Let $h^*(\Delta_{k,n},z) = h_0^*(\Delta_{k,n}) + h_1^*(\Delta_{k,n}) z + \cdots + h_{n-1}^*(\Delta_{k,n}) z^{n-1}$ be the $h^*$-polynomial of the hypersimplex $\Delta_{k,n}$.
The number of hypersimplicial decorated ordered set partitions of type $(k,n)$ and winding number $d$ is $h_d^*(\Delta_{k,n})$.
\end{theorem}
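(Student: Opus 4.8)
\emph{Enumerative route (the reliable one).} The plan is to compute both sides in closed form and match. On the polytope side, $E(\Delta_{k,n},t)$ counts the $(x_1,\dots,x_n)\in\Z_{\ge 0}^n$ with each $x_i\le t$ and $\sum_i x_i=kt$, so inclusion--exclusion on the upper bounds gives $E(\Delta_{k,n},t)=\sum_{i\ge 0}(-1)^i\binom{n}{i}\binom{(k-i)t-i+n-1}{n-1}$, a polynomial of degree $n-1=\dim\Delta_{k,n}$. Since $\sum_{t\ge 0}E(\Delta_{k,n},t)z^t=h^*(\Delta_{k,n},z)/(1-z)^{n}$, extracting coefficients yields the clean identity $h^*_d(\Delta_{k,n})=\sum_{j=0}^{d}(-1)^j\binom{n}{j}E(\Delta_{k,n},d-j)$, hence an explicit double binomial sum (this is essentially Stanley's computation of the ``hypersimplex polynomial''). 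On the combinatorial side I would count hypersimplicial decorated ordered set partitions of type $(k,n)$ with winding number $d$: such an object is, up to cyclic rotation of its $p$ blocks, an ordered set partition $(S_1,\dots,S_p)$ of $[n]$ with gaps $1\le s_i\le|S_i|-1$, $\sum_i s_i=k$, and the winding number is determined by $\sum_{i=1}^n l_i=kd$. I would first count the \emph{linearly} (not cyclically) ordered such objects tracking the statistic $\sum_i l_i$ by a transfer/insertion argument --- place $1,2,\dots,n$ into an ordered list of blocks and, for each consecutive pair, record how many gaps are crossed moving clockwise --- which produces a second binomial sum, then divide out the cyclic $\Z/p$ action (the decoration breaks most of the periodicity, so the correction is mild). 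Matching the two binomial sums coefficient-by-coefficient in $z$, or showing both satisfy the same recursion in $n$ (peel off the coordinate $x_n$ on one side, the block containing $n$ on the other), finishes the proof.

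\emph{Geometric route (closer to the present paper).} Realized in the $y$-coordinates of \cref{sec:2n}, $\Delta_{k,n}$ is alcoved and its alcove triangulation is unimodular (it is the classical triangulation into $A(n-1,k-1)$ simplices), so $h^*(\Delta_{k,n},z)$ equals the $h$-polynomial of that triangulation, i.e.\ $\sum_{A}z^{|\mathscr{R}(A)|}$ over alcoves $A$ for any shelling order. In particular \cref{thm:main} already gives $h^*(\Delta_{k,n},z)=\sum_{v}z^{\wt(v)}$ for the breadth-first-search shelling of \cref{cor:shelling}. It then remains to produce a bijection between the alcoves of $\Delta_{k,n}$ and the hypersimplicial DOSPs of type $(k,n)$ --- a correspondence essentially due to Early, which encodes the cyclic ``type'' data of an alcove as a circular arrangement of blocks and gaps --- under which $\wt(v)$ (equivalently $|\mathscr{R}(A)|$) equals the winding number. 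Carrying this out for $k=2$ is precisely the conjecture raised at the end of the paper, so this route is expected to yield a uniform statement only once the bijection-with-statistics is established in general; until then one falls back on the enumerative route.

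\emph{Main obstacle.} In either route the crux is the same: the winding number is a \emph{global} statistic --- it measures how many times the closed loop $1\to 2\to\cdots\to n\to 1$ wraps the circle of circumference $k$ --- whereas the shelling restriction $\mathscr{R}(A)$, the descent-type bookkeeping in the generating function, and $\wt(v)$ are all assembled from \emph{local} data (descents of a permutation, boundary crossings of consecutive coordinates, weights of covered alcoves). The work is in showing these local counts telescope around the cycle to recover the winding number; in the geometric route this amounts to identifying exactly which facets of an alcove lie in the union of the earlier alcoves and checking that each such facet records one ``wrap'' of Early's circular picture. A secondary but genuine nuisance is the cyclic-rotation quotient in the DOSP enumeration, which I would dispose of by noting that aperiodic decorations dominate and the periodic ones cancel against the contribution of smaller $n$ in the recursion, so the quotient does not disturb the coefficient match.
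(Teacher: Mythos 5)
First, note that the paper does not prove this statement: it is imported verbatim from \cite{Kimh*} (building on \cite{early2017conjectures}), and the introduction explicitly describes the known proofs as relying on algebraic manipulation and enumerative combinatorics. So there is no in-paper argument to compare against; the question is whether your outline would itself constitute a proof, and as it stands it would not. Your enumerative route correctly sets up the polytope side --- the inclusion--exclusion formula for $E(\Delta_{k,n},t)$ and the identity $h^*_d(\Delta_{k,n})=\sum_{j=0}^{d}(-1)^j\binom{n}{j}E(\Delta_{k,n},d-j)$ are both standard and correct --- but the entire content of the theorem lives on the combinatorial side, where you only gesture at a ``transfer/insertion argument'' producing ``a second binomial sum'' that is never written down, followed by ``matching the two binomial sums'' or ``showing both satisfy the same recursion,'' neither of which is carried out. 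That refined count of hypersimplicial decorated ordered set partitions of type $(k,n)$ by winding number, and its identification with the Ehrhart-side sum, is precisely what \cite{Kimh*} proves and is genuinely delicate; deferring it leaves the proof empty. (One small correction: the cyclic quotient is not a nuisance requiring cancellation of periodic orbits. Since the blocks are pairwise disjoint nonempty subsets of $[n]$, no nontrivial rotation fixes a tuple, so every cyclic class has exactly $p$ linear representatives and one simply divides by $p$; your proposed fix via cancellation against smaller $n$ is both unnecessary and, as stated, not correct.)

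Your geometric route is honestly flagged as circular for present purposes: it reduces the theorem to a bijection between alcoves and decorated ordered set partitions taking the shelling statistic $\wt(v)$ of \cref{thm:main} to the winding number, which is exactly the conjecture the paper formulates at the end (and only for $k=2$ at that). Your identification of the main obstacle --- that the winding number is a global wrap count while the shelling restriction is assembled from local facet data --- is accurate and explains well why that conjecture is not a triviality, but naming the obstacle is not the same as overcoming it. As submitted, neither route yields a proof of the stated theorem.
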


\begin{table}[H]
    \centering
    \begin{tabular}{|c|c|}
    \hline
    winding number & $\OSP(\Delta_{2,4})$ \\ \hline
    0 & $((1234)_2)$ \\ \hline
    1 & $((12)_1(34)_1)$ \\ \hline
    1 & $((14)_1(23)_1)$ \\ \hline
    2 & $((13)_1(24)_1)$ \\ \hline
    \end{tabular}
    \caption{The $h^*$-polynomial of the octahedron $\Delta_{2,4}$ is $1+2z+z^2$.}
\end{table}

In \cite{Kimh*} this result is obtained by directly counting the number of hypersimplicial decorated ordered set partitions and comparing the result to the coefficients in $h^*(\Delta_{k,n},z)$.  In this section of our paper, we use our formula in \cref{thm:main} to give a bijective proof of this result for the hypersimplex $\Delta_{2,n}$.

We associate a hypersimplicial decorated ordered set partition of type $(2,n)$ with winding number one to each edge in $\Gamma_{2,n}$.

\begin{definition}\label{def:edgelabels}
Let $A,A' \subseteq \Delta_{2,n}$ be two alcoves such that $A$ and $A'$ share a common facet. Then $A \cap A'$ is of the form $y_j - y_i = 1$ for some $i \not\equiv j \pm 1 \pmod n$.
We associate the hypersimplicial decorated ordered set partition $([i-1,j]_1,[j-1,i]_1)$ to the edge $(A,A')$ in the graph $\Gamma_{2,n}$.  In terms of the permutation characterization of $\Gamma_{2,n}$ given in \cref{thm:alcove perm description}, this edge corresponds to the transposition $ij\to ji$.
\end{definition}

% \begin{definition}\label{def:newDOSP}
% For two hypersimplicial decorated ordered set partitions $\{A, B\}, \{A', B'\}$ of type $(2,n)$ with nonzero winding numbers, we define $\psi(\{A, B\}, \{A', B'\})$ to be a new hypersimplicial decorated ordered set partition $\{A'', B''\}$ of type $(2,n)$ such that $A'' = A \triangle A'$ is the symmetric difference between $A$ and $A'$ and $B'' = [n] \setminus A''$ is the complement of $A''$.

% For a collection of $d$ adjacent hypersimplicial decorated ordered set partitions $\{A_1,B_1\},\dots, \{A_d, B_d\}$ of type $(2,n)$ and winding number one, we define $\psi(\{A_1,B_1\},\dots, \{A_d, B_d\})$ recursively to be 
% $$\psi(\psi(\{A_1,B_1\},\dots, \{A_{d-1}, B_{d-1}\}), \{A_d, B_d\}).$$
% \end{definition}

\begin{definition}\label{def:newDOSP} Let $S^C=[n]\setminus S$.  A hypersimlicial decorated ordered set partition of type $(2,n)$ has the form $((S)_1,(S^C)_1)$; we will drop the decorations when they are clear from context. For two hypersimplicial decorated ordered set partitions $(S,S^C), (T,T^C)$ of type $(2,n)$ with nonzero winding numbers, we define
$$\psi((S,S^C), (T,T^C)) = (S \triangle T, S \triangle T^C),$$
where $S \triangle T = (S \setminus T) \cup (T \setminus S)$ is the symmetric difference of the two sets.

For a collection of $d$ adjacent hypersimplicial decorated ordered set partitions $\{(S_i,S_i^C)\}_{i=1}^d$ of type $(2,n)$ and winding number one, we define 
$$\psi((S_i,S_i^C)_{i=1}^d) = (S_1 \triangle \cdots \triangle S_d, [n] \setminus (S_1 \triangle \cdots \triangle S_d)).$$
\end{definition}

\begin{remark}
If we view the $S_i$'s as a binary vectors in $\{0,1\}^n$, then $\triangle$ is the XOR operator.
\end{remark}

\begin{example}
Consider $((123)_1(456)_1)$, $((234)_1(156)_1)$, and $((345)_1(126)_1)$ of type $(2,6)$ and winding number one, we have
\begin{align*}
    &\psi(((123)_1(456)_1),((234)_1(156)_1),((345)_1(126)_1)) \\
    &= \psi(((14)_1(2356)_1), ((345)_1(126)_1)) \\
    &= ((135)_1(246)_1).
\end{align*}
\end{example}

\noindent We can finally state the main result of this section:
\begin{theorem}
\label{thm:hypersimplicial decorated ordered set partition2n}
For any $n$ and for any alcove $A_0$ in $\Delta_{2,n}$, let $\mathcal{P}_{A_0}$ be the breadth-first search order of $\Gamma_{2,n}$ with root $A_0$ (see \cref{def:partialorder}).
For an alcove $A$ in $\Delta_{2,n}$, let $\cover(A)$ be the number of alcoves $A$ covers in the poset $\mathcal{P}_{A_0}$.
In other words, $\cover(A)$ is the number of incoming edges of $A$ in the directed graph we obtain from the Hasse diagram of $\mathcal{P}_{A_0}$.
Applying the map $\psi$ to the set of incoming edges of an alcove gives a bijection from the set of alcoves $A$ with $\cover(A) = d$ and the set of hypersimplicial decorated ordered set partitions of type $(2,n)$ with winding number $d$.
\end{theorem}

\begin{figure}
    \centering
    \begin{tikzpicture}[scale=3.5]
    % Nodes
    \node[inner sep=2pt, scale=.6] (4123) at (0:1) {$((35)_1(124)_1)$}; 
    \node[inner sep=2pt, scale=.6] (1243) at (72:1) {$((24)_1(135)_1)$}; 
    \node[inner sep=2pt, scale=.6] (1324) at (144:1) {$((13)_1(245)_1)$};
    \node[inner sep=2pt, scale=.6] (2134) at (216:1) {$((25)_1(134)_1)$};
    \node[inner sep=2pt, scale=.6] (2341) at (288:1) {$((14)_1(235)_1)$}; 
    
    \node[inner sep=2pt, scale=.6] (1423) at (36:0.6) {$((34)_1(125)_1))$}; 
    \node[inner sep=2pt, scale=.6] (3124) at (108:0.6) {$((23)_1(145)_1)$}; 
    \node[inner sep=2pt, scale=.6] (1342) at (180:0.6) {$((12)_1(345)_1)$};
    \node[inner sep=2pt, scale=.6] (2314) at (252:0.6) {$((15)_1(234)_1)$};
    \node[inner sep=2pt, scale=.6] (3412) at (324:0.6) {$((45)_1(123)_1)$}; 

    \node[inner sep=2pt, scale=.6] (3142) at (0,0) {$((12345)_2)$};

    \draw[thick,->] (3142) -- (1423);
    \draw[thick,->] (3142) -- (3124);
    \draw[thick,->] (3142) -- (1342);
    \draw[thick,->] (3142) -- (2314);
    \draw[thick,->] (3142) -- (3412);

    \draw[thick,->,cborange] (3124) -- (1324) node[midway,above=.2,scale=0.6] {$((12)_1(345)_1)$};
    \draw[thick,->] (3124) -- (1243);
    
    \draw[thick,->] (1423) -- (4123);
    \draw[thick,->] (1423) -- (1243);
    
    \draw[thick,->] (3412) -- (4123);
    \draw[thick,->] (3412) -- (2341);
    
    \draw[thick,->] (2314) -- (2134);
    \draw[thick,->] (2314) -- (2341);

    \draw[thick,->] (1342) -- (2134);
    \draw[thick,->,cborange] (1342) -- (1324) node[midway,left,scale=0.6] {$((23)_1(145)_1)$};
\end{tikzpicture}

    \caption{$\Delta_{2,5}$ with $A_0$ at the center.
    The arrows indicate cover relations in $\mathcal{P}_{A_0}$, pointing in increasing directions.
    The orange arrows are facets representing the cover relations of the alcove labeled by $((13)_1(245)_1)$.}
    \label{fig:hypersimplex25}
\end{figure}

\begin{example}
In \cref{fig:hypersimplex25}, we choose $A_0$ to be the simplex in the center of the graph $\Gamma_{2,5}$ and we label it by the unique hypersimplicial decorated ordered set partition of type $(2,5)$ with winding number 0, which is $((12345)_2)$.
The arrows are cover relations, and they point in increasing directions in $\mathcal{P}_{A_0}$. The alcove labeled by $((13)_1(245)_1)$ covers two alcoves in the poset $\mathcal{P}_{A_0}$, through facets colored by orange labeled by $((12)_1(345)_1)$ and $((23)_1(145)_1)$.
\end{example}

We begin by giving a necessary condition for two edges $\{i_1,j_1\}$ and $\{i_2,j_2\}$ to appear together as incoming edges of an alcove $A$ in $\mathcal{P}_{A_0}$.
\begin{definition}
     We say $\{i_1,j_1\}$ and $\{i_2,j_2\}$ are crossing if $i_1<i_2<j_1<j_2$ in the standard cyclic order of $[n]$, and noncrossing otherwise.
\end{definition}
In other words, if we place $1,\dots,n$ in clockwise order on a circle, then the chord $i_1 j_1$ crosses with the chord $i_2 j_2$ in the interior of the circle, as shown in \cref{fig:crossing_chords}.

\begin{lemma}\label{lem:incoming_edge_descriptions}
    Let $A$ be an alcove whose incoming edges include distinct edges $\{i_1,j_1\}$ and $\{i_2,j_2\}$. Then $\{i_1,j_1\}$ and $\{i_2,j_2\}$ are crossing.
\end{lemma}

\begin{proof}
    Write $S_1=[i_1+1,j_1]$ and $S_2=[i_2+1,j_2]$.  If $\{i_1,j_1\}$ and $\{i_2,j_2\}$ are noncrossing, we may assume without loss of generality that $S_1\subsetneq S_2$.  $A$ either has a unique vertex $v$ with $\sum_{i\in S_1}v_i=2$ or a unique vertex $v$ with $\sum_{i\in S_1}v_i=0$.  In the former case, $v$ also satisfies $\sum_{i\in S_2}v_i=2$, and $v$ is the unique vertex of $A$ for which this is true, but this implies that $\{i_1,j_1\}$ and $\{i_2,j_2\}$ give the same edge.  Thus $A$ has a unique vertex $\Tilde{v}$ such that $\sum_{i\in S_1}\Tilde{v}_i=0$ and all other vertices $v$ of $A$ satisfy $\sum_{i\in S_1}v_i=1$.  Similarly, if there is a unique vertex $v$ of $A$ satisfying $\sum_{i\in S_2}v_i=0$, then $v=\Tilde{v}$, again implying $\{i_1,j_1\}$ and $\{i_2,j_2\}$ are the same.  Thus there exists a unique vertex $\dbtilde{v}$ of $A$ such that $\sum_{i\in S_2}\dbtilde{v}_i=2$ and all other vertices $v$ satisfy $\sum_{i\in S_2}v_i=1$. We now know that every vertex $v$ of the root alcove $A_0$ satisfies $\sum_{i\in S_1}v_i\geq 1$ and $\sum_{i\in S_2}v_i\leq 1$. This implies all vertices $v$ of $A_0$ satisfy $\sum_{i\in S_1}v_i=\sum_{i\in S_2}v_i=1$, which implies that $A_0$ is not full dimensional, yielding a contradiction.
\end{proof}

\begin{figure}
    \centering
\begin{tikzpicture}
    \def\r{2} % Radius of the circle

    % Define the angles of the six points in circular order
    \def\aA{20}
    \def\aC{80}
    \def\aB{140}
    \def\aF{180}
    \def\aD{220}
    \def\aE{320}

    % Draw alternating arcs between points
    \draw[thick, cbblue]   ({\r*cos(\aA)}, {\r*sin(\aA)}) arc[start angle=\aA, end angle=\aC, radius=\r];
    \draw[thick, cborange] ({\r*cos(\aC)}, {\r*sin(\aC)}) arc[start angle=\aC, end angle=\aB, radius=\r];
    \draw[thick, cbblue]   ({\r*cos(\aB)}, {\r*sin(\aB)}) arc[start angle=\aB, end angle=\aF, radius=\r];
    \draw[thick, cborange] ({\r*cos(\aF)}, {\r*sin(\aF)}) arc[start angle=\aF, end angle=\aD, radius=\r];
    \draw[thick, cbblue]   ({\r*cos(\aD)}, {\r*sin(\aD)}) arc[start angle=\aD, end angle=\aE, radius=\r];
    \draw[thick, cborange] ({\r*cos(\aE)}, {\r*sin(\aE)}) arc[start angle=\aE, end angle=\aA+360, radius=\r];

    % Define points for the chords
    \coordinate (A) at ({\r*cos(\aA)}, {\r*sin(\aA)});
    \coordinate (B) at ({\r*cos(\aB)}, {\r*sin(\aB)});
    \coordinate (C) at ({\r*cos(\aC)}, {\r*sin(\aC)});
    \coordinate (D) at ({\r*cos(\aD)}, {\r*sin(\aD)});
    \coordinate (E) at ({\r*cos(\aE)}, {\r*sin(\aE)});
    \coordinate (F) at ({\r*cos(\aF)}, {\r*sin(\aF)});

    % Draw chords (in general position)
    \draw[thick] (A) -- (F);
    \draw[thick] (B) -- (E);
    \draw[thick] (C) -- (D);

    \foreach \i in {1,...,18} {
    \node[inner sep=2pt] at ({1.15*\r*cos(-360/18*\i)}, {1.15*\r*sin(-360/18*\i)}) {{\tiny \i}};}

    \foreach \i in {-2,...,0} {
    \node[circle, fill=cborange, inner sep=2pt] at ({\r*cos(360/18*\i)}, {\r*sin(360/18*\i)}) {};}
    \foreach \i in {-7,...,-3} {
     \node[circle, fill=cbblue, inner sep=2pt] at ({\r*cos(360/18*\i)}, {\r*sin(360/18*\i)}) {};}
    \foreach \i in {1,...,3} {
    \node[circle, fill=cbblue, inner sep=2pt] at ({\r*cos(360/18*\i)}, {\r*sin(360/18*\i)}) {};}
    \foreach \i in {4,...,6} {
    \node[circle, fill=cborange, inner sep=2pt] at ({\r*cos(360/18*\i)}, {\r*sin(360/18*\i)}) {};}
    \foreach \i in {7,...,8} {
    \node[circle, fill=cbblue, inner sep=2pt] at ({\r*cos(360/18*\i)}, {\r*sin(360/18*\i)}) {};}
    \foreach \i in {9,...,10} {
    \node[circle, fill=cborange, inner sep=2pt] at ({\r*cos(360/18*\i)}, {\r*sin(360/18*\i)}) {};}

\end{tikzpicture}
    \caption{These chords give a DOSP $\{S,S^C\}$ of winding number 3, where vertices in orange regions belong to $S$ and vertices in blue regions belong to $S^C$.}
    \label{fig:crossing_chords}
\end{figure}

\cref{lem:incoming_edge_descriptions} allows us to show in \cref{cor:num_edges=WN} and \cref{lem:injective_labels_to_DOSPs} that $\psi$ is in fact an injective map from collections of incoming edges for some alcove  to DOSPs of the appropriate winding number.

\begin{corollary}\label{cor:num_edges=WN}
    $\psi$ maps collections of $d$ winding number one DOSPs coming from collections of incoming edges to DOSPs of winding number $d$.
\end{corollary}
\begin{proof}
    Without loss of generality, assume the incoming edges are labelled $\{i_1,j_1\},\ldots,(i_d,j_d)$, listed lexicographically.  When $n$ vertices are placed on a circle and labelled 1 through $n$ in clockwise manner, the chords $\{i_1,j_1\},\ldots,(i_d,j_d)$ partition $[n]$ into $2n$ intervals.  Applying $\psi$ yields the DOSP $(S_1,S^C_1)$ where $S$ is the union of every other interval.  See \cref{fig:crossing_chords}.

\end{proof}

\begin{lemma}\label{lem:injective_labels_to_DOSPs}
    Given a DOSP $(S,S^C)$ of winding number $d$, there is a unique way to write $(S,S^C)=\psi((S_1,S_1^C),\ldots,(S_d,S_d^C))$ where $(S_1,S_1^C),\ldots,(S_d,S_d^C)$ are winding number one DOSPs which are ``crossing'' in the sense of the previous lemma (up to permutation of the $(S_i,S_i^C)$).
\end{lemma}

\begin{proof}
    Since $(S,S^C)$ has winding number $d$, we may write $S=I_1\sqcup \ldots\sqcup I_d$ and $S^C=J_1\sqcup\ldots\sqcup J_d$ where each $I_i$ and each $J_i$ is a contiguous interval in the usual cyclic ordering of $[n]$ and $I_1<J_1<I_2<\ldots<J_d$, where $T<T'$ means $\max(T)<\min(T')$.  Then because of the crossing condition the maximi of $I_1,J_1,I_2,\ldots,J_d$ are $\min (S_1)-1,\min(S_2)-1,\ldots,\min(S_d)-1,\max(S_1),\ldots,\max(S_d)$, up to some permutation of the $S_i$.
\end{proof}

\begin{lemma}\label{lem:injective_edge_labels}
    The map sending an alcove $A$ of $\Delta_{2,n}$ to its set of incoming labels with respect to $\mathcal{P}_{A_0}$ is injective.
\end{lemma}

\noindent Before proving this lemma, we introduce some notation from \cite{parisi2024magic}, and prove one more lemma which allows us to locally determine the orientation of each in $\Gamma_{2,n}$ under $\mathcal{P}_{A_0}$ in terms of the associated permutations.

\begin{definition}
    %We consider the cyclic permutation labeling of the alcoves, where each alcove $A$ corresponds to a permutation $\omega_A$ in $S_n$ up to cyclic shifts (ie. $(\omega_A)_1\ldots (\omega_A)_{n}=(\omega_A)_2\ldots (\omega_A)_{1})$) where $\omega_A^{-1}$ has exactly 2 cyclic descents (ie. exactly two $i$ such that $(\omega_A)_i>(\omega_A)_{i+1}$).  This is equivalent to saying that any fixed cyclic shift of $\omega_A$ is a shuffle of the two words $(i+1,\ldots,j)$ and $(j+1,\ldots,i)$ for some $i,j\in[n]$, excluding the trivial shuffles $(1,2,\ldots, n)$ and its cyclic shifts.  Let $\omega_A^c$ be the permutation obtained from $\omega_A$ by fixing the cyclic shift with last entry $c$ and taking the (noncyclic) permutation formed by the first $n-1$ entries.  If $\omega_A^c$ is a shuffle of the words $(c+1,\ldots, b)$ and $(b+1,\ldots, c-1)$ where $b\in\{c+1,c+2,\ldots,c-2\}$, then we say $\shuf_c(A)=b$.  Also let $<_c$ be the total ordering on $[n]$ given by $c<_c c+1<_c \ldots <_c c-1$.  If $\omega_{A_1}$ and $\omega_{A_2}$ are related by the transposition $(b,c)$  is straightforward to show that $A_1\xrightarrow[]{(b,c)} A_2$ if and only if $\shuf_c(A_1)$ is closer than $\shuf_c(A_2)$ to $\shuf_c(A_0)$ in the $<_c$ ordering.

    Let $w\in \mathcal{S}_n$.  We say that $i\in[n]$ is a \emph{left cyclic descent} if $i<n$ and $w^{-1}(i)>w^{-1}(i+1)$ or if $i=n$ and $w^{-1}(1)<w^{-1}(n)$.  We write $\cDes_L(w)$ for the set of left cyclic descents of $w$.
\end{definition}
\begin{definition}
    For $w\in \mathcal{S}_n$, define $w^{(c)}$ to be the rotation of $w$ ending in $c$.
\end{definition}
\begin{definition}[\cite{GZshuffle}]
    Let $u\in \mathcal{S}_m$ and $v\in \mathcal{S}_n$ be permutations on disjoint alphabets.  A \emph{shuffle} of $u$ and $w$ is a permutation in $\mathcal{S}_{m+n}$ such that the entries of $u$ appear in order and the entries of $w$ appear in order.  We say a shuffle $w$ of $u$ and $v$ is \emph{trivial} if $w$ is the concatenation of $u$ and $v$ in either order.
\end{definition}
\begin{definition}
    Let $u\in \mathcal{S}_n$ and $I\subset[n]$.  Define $u|_I$ to be the permutation on alphabet $I$ obtained by restricting $u$ to the entries $I$.
\end{definition}

\noindent Let $i \in [n]$. The \emph{i-order} $<_i$ on the set $[n]$ is the total order
$$
i <_i i+1 <_i \cdots <_i n <_i 1 <_i \cdots <_i i-2 <_i i-1.
$$
The \emph{Gale order} on $\binom{[n]}{k}$ (with respect to $<_i$) is the partial order $\leq_i$ defined as follows: for any two $k$-subsets $S = \{s_1 <_i \cdots <_i s_k \} \subseteq [n]$ and $T = \{t_1 <_i \cdots <_i t_k \} \subseteq [n]$, we have $S \leq_i T$ if and only if $s_j \leq_i t_j$ for all $j \in [k]$ \cite{gale}.
\vspace{-5pt}
\begin{lemma}\label{lemma:monotonicdirection}
Let $A$ be an alcove in $\Delta_{2,n}$.
For any $\{i,j\}$ in the incoming edge set of $A$, such that $w_A^{-1}(i)+1 \equiv w_A^{-1}(j) \pmod n$, then we have $\cDes_L(w_{A_0}^{(j)}) <_j \cDes_L(w_{A}^{(j)})$ and $\cDes_L(w_{A_0}^{(i)}) >_i \cDes_L(w_{A}^{(i)})$ in the Gale order on $\binom{[n]}{2}$ with respect to $<_j$ and $<_i$.
\end{lemma}

\begin{proof}
We proceed by induction on the graphical  distance between $A_0$ and $A$.
Suppose the distance between $A_0$ and $A$ is equal to 1, then $w_{A_0}^{(j)} = [i,~\text{--}~u~\text{--}~,j]$ while $w_A^{(j)} = [~\text{--}~u~\text{--}~,i,j]$, where $u$ is some word in $[n]\setminus \{i,j\}$.
Since $i-j \not\equiv \pm 1 \pmod n$, we have $j <_j i-1 <_j i$ and $i <_i j-1 <_i j$, so $\cDes_L(w_{A_0}^{(j)}) = \{j <_j i-1 \} <_j \cDes_L(w_{A}^{(j)}) = \{j <_j i\}$, and $\cDes_L(w_{A_0}^{(i)}) = \{i <_i j\} >_i \cDes_L(w_{A}^{(i)}) = \{i <_i j-1\}$. For any $k \notin \{i,j\}$, we have $\cDes_L(w_A^{(k)}) = \cDes_L(w_{A_0}^{(k)})$.

Suppose this is shown for all $A'$ of graphical distance $< \ell$ to $A_0$, and suppose $w_A$ can be obtained from $w_A'$ by $ji \to ij$.
Then $\cDes_L(w_{A'}^{(j)}) <_j \cDes_L(w_{A}^{(j)})$ and $\cDes_L(w_{A}^{(i)}) <_i \cDes_L(w_{A'}^{(i)})$.
It suffices to show that $\cDes_L(w_{A_0}^{(j)}) <_j \cDes_L(w_{A'}^{(j)})$ and $\cDes_L(w_{A'}^{(i)}) <_i \cDes_L(w_{A_0}^{(i)})$.
In other words, we want to show that for any shortest path from $A'$ to $A_0$ in $\Gamma_{2,n}$, in the correspoinding permutations of the alcoves, $j$ can only move to the left, and $i$ can only move to the right ($\ast$).
We show this by induction.
The induction hypothesis is trivial at $A'$, because otherwise we will go to $A$, which increases the distance to $A_0$, contradicting our assumption.
Moreover, if $w_{A'} = (\dots,k,j,i,l,\dots)$, then $k \in \{i-1,j-1\}$ and $l \in \{i+1,j+1\}$ since $w_{A'}$ has two cyclic descents.
By induction, this shows that that for any $A''$ in any shortes path from $A'$ to $A_0$, if $w_{A''} = (\dots, j, u, i, \dots)$, there exists $k$ and $l$ such that $u$ is a shuffle of $[i-k,i-1]$ and $[j+1,j+l]$.
Whenever $j$ moves to the left, we increase $k$, and whenever $i$ moves to the right, we increase $l$.
From any alcove $A''$ in any shortest path from $A'$ to $A_0$, if $j$ moves to the right or $i$ moves to the left, then we would go from $A''$ to an alcove that is closer to $A'$ but farther from $A_0$, contradicting our assumption.
Therefore, $(\ast)$ is true for all alcoves in the shortest path from $A'$ to $A_0$, concluding the proof.
\end{proof}

\begin{proof}[Proof of Lemma \ref{lem:injective_edge_labels}]

 Let $(w_A)$ and $(w_{A_0})$ be the long cycles associated with the alcove $A$ and $A_0$ respectively.  We want to show that $(w_A)$ can be uniquely reconstructed from its incoming edge set $E\subseteq\binom{n}{2}$.  For each $\{i,j\}\in E$,  consider $\cDes_L(w_A^{(j)})$ and $\cDes_L(w_{A_0}^{(j)})$.  Both sets are of size 2 and contain $j$.  Let $s\neq j$ be the other element of $\cDes_L(w_{A_0}^{(j)})$.  Either $(w_A)=(\ldots, w_{n-2},i,j,w_1,\ldots)$ or $(w_A)=(\ldots,w_{n-2},j,i,w_1,\ldots)$.  In the first case, $i\in\cDes_L(w_A^{(j)})$ and in the second $i-1\in\cDes_L(w_A^{(j)})$.  Thus by \cref{lemma:monotonicdirection} we have the first case if $s\leq_{j} i-1$ and the second if $s\geq_j i$.
Further, we see from the condition that we have two cyclic descents that if $E=\{\{i_1<j_1\},\ldots,\{i_d<j_d\}\}$ with $i_1<i_2<\dots<i_d$, then $i_1,\ldots,i_d$ appear in increasing order in $(w_A)$ (and we also have $w^{-1}(j_p)\equiv w^{-1}(i_p)\pm 1 \pmod n$).  Further, between the pairs $\{i_p,j_p\}$ and \{$i_{p+1},j_{p+1}\}$ we have some shuffle $u_p$ of the permuations $[i_p+1,\ldots,i_{p+1}-1]$ and $[j_p+1,\ldots,j_{p+1}-1]$.  We also know that $u_p$ appears as a subword in some cyclic rotation of $w_{A_0}$, as otherwise we would have an incoming edge $\{\Tilde{i},\Tilde{j}\}$  with $\Tilde{i}\in I_p:=\{i_p+1,\ldots,i_{p+1}-1\},\Tilde{j}\in J_p:=\{j_p+1,\ldots,j_{p+1}-1\}$.  Thus we know $(u_p)=(w|_{I_p\cup J_p})$, and either there is a unique rotation of $(u_p)$ that is a shuffle of $[i_p+1,\ldots,i_{p+1}-1]$ and $[j_p+1,\ldots,j_{p+1}-1]$, in which case we are done, or $(u_p)=(i_p+1,\ldots,i_{p+1}-1,j_p+1,\ldots,j_{p+1}-1)$, so $u_p$ could be either of the two trivial shuffles (in this case assume $|I_p|,|J_p|\neq 0$, otherwise the trivial shuffles coincide).  To decide which trivial shuffle to take, first assume without loss of generality that $j_{p+1}-1=n$, and consider $\cDes_L{w_{A_0}^{(n)}}=\{a,n\}$.  Either $a\leq i_p$ or $a\geq i_{p+1}$, since the elements of $I_p$ appear in order in $w_{A_0}^{(n)}$.

Consider the first case.  In this case we have $\cDes_L(w_{A_0}^{(j_p+1)})=\{\Tilde{a},j_p+1\}$, where $\Tilde{a}\leq a\leq i_p$ or $\Tilde{a}=n$ since we are potentially rotating the subword $[\Tilde{a}+1,\ldots,a]$ of $w_{A_0}^{(n)}$ to be in front of the entry $\Tilde{a}$.  If $u_p=[i_p+1,\ldots,i_{p+1}-1,j_p+1,\ldots,n]$, then $\cDes_L(w_A^{(j_{p}+1)})=\{i_{p+1}-1,j_{p}+1\}$.  Thus by \cref{lemma:monotonicdirection} we would have an incoming edge $\{i_{p+1}-1,j_{p}+1\}$, since $\Tilde{a}\leq_{j_{p+1}} a \leq_{j_{p+1}} i_p <_{j_{p+1}} i_{p+1}-1$.

In the second case, we have $a\geq i_{p+1}$.  In this case, we have $\cDes_L(w_{A_0}^{(i_{p+1})})=\{i_{p+1},\Tilde{a}\}$ where $a\leq\Tilde{a}<n$ since we are potentially rotating the subword $[\Tilde{a}+1,\ldots, n]$ of $w_{A_0}^{(n)}$ to be in front of the entry $\Tilde{a}$.  If $u_p=[j_{p}+1,\ldots,n,i_p+1,\ldots,i_{p+1}-1]$, then $\cDes_L(w_{A}^{(i_p+1)})=\{i_p+1,n\}$.  Thus by \cref{lemma:monotonicdirection} we would have an incoming edge $\{i_p+1,n\}$ since $n>_{i_p+1}\Tilde{a}\geq_{i_{p}+1}i_{p+1}$.

Thus in both cases we can elimate one of the trivial shuffles as an option.

\end{proof}

\begin{figure}
    \centering
\begin{tikzpicture}[scale=2]
\foreach \i in {1,...,6} {
        \coordinate (P\i) at ({360/6*(3-\i)}:1);
        \node at ({360/6*(3-\i)}:1.2) {\i};
    }
    
    % (Optional) Draw the circumscribed circle for context
    \draw[gray] (0,0) circle (1);
    
    % Draw the thrackle triangle with alternating colored edges:
    % Here: edge P1--P2 and edge P3--P1 are blue, while edge P2--P3 is orange.
    \draw[thick] (P1) -- (P4);
    \draw[thick] (P1) -- (P2);
    \draw[thick] (P2) -- (P4);
    \draw[thick] (P2) -- (P5);
    \draw[thick] (P2) -- (P6);
    \draw[thick] (P1) -- (P3);
\end{tikzpicture}
    \caption{There is a way to read a permutation cycle from the vertices of an alcove in $\Delta_{2,n}$ with the visual help of \emph{thrackles} \cite{dLST,FPthrackle}. For each vertex $e_i+e_j$ of $A$, we draw a chord in the circle with endpoints $i,j$. The permutation cycle represented by this figure is $(1,3,4,2,5,6)$. The facet opposite to the vertex $e_i+e_j$ can be an incoming edge only if the chord $ij$ belongs to the cycle.}
    \label{fig:readingfromthrackle}
\end{figure}

\begin{example}
    % Consider $\Delta_{2,6}$.
    % Let $(w_{A_0})=(1,2,4,3,5,6),$ and $E=\{\{1,3\},\{2,6\}\}$.
    % $$\cDes_L(w_{A_0}^{(6)})=\{3,6\}, 3\geq_6 2\Rightarrow w_A=(w_1,w_2,w_3,w_4,6,2),$$
    % Now we take $(w_{A_0'})=(5,1,4,3)$ and consider the edge $\{1,3\}$.  $\cDes_L(w_{A_0'}^3)=\{3,4\},4\leq_3 1$, so $(w_{A'})=(w_1',w_2',1,3)$.  Then we are out of edges, so we have $(w_{A'})=(4,5,1,3)$. In particular, since $\cdes_L(w_{A'})=$ we take the cyclic rotation $1345$.  Finally, we recover $(w_A)=(1,3,4,5,6,2)$.
    Consider $\Delta_{2,15}$.
    Let $(w_{A_0})=(1,2,3,4,5,10,6,7,11,8,12,9,13,14,15)$ and $E=\{\{1,8\},\{3,11\},\{4,13\}\}$.  From the condition on edge orientations, we can deduce that $(w_A)=(1,8,~ \text{---} ~u_1~ \text{---} ~,11,3,~ \text{---} ~u_2~ \text{---} ~,13,4,~ \text{---} ~u_3~ \text{---} ~)$, where $u_1$ is a shuffle of $[2]$ and $[9,10]$, $u_2=[12]$, and $u_3$ is a shuffle of $[5,6,7]$ and $[14,15]$.  We know $(u_1)=(w_{A_0}|_{\{2,9,10\}})=(2,10,9)$, and the only rotation of this which is a shuffle of $[2]$ and $[9,10]$ is $u_1=[9,2,10]$.  We also know that $(u_3)=(w_{A_0}|_{\{5,6,7,14,15\}})=(5,6,7,14,15)$.   We have $\cDes_L(w_{A_0}^{(15)})=\{9,15\}$.  Because $9>8=j_3,$ we deduce $u_3=[5,6,7,14,15]$.  Combining these observations, we have $(w_A)=(1,8,9,2,10,11,3,12,13,4,5,6,7,14,15)$.
\end{example}

\begin{proof}[Proof of Theorem \ref{thm:hypersimplicial decorated ordered set partition2n}]

By Corollary \ref{cor:num_edges=WN} and Lemmas \ref{lem:injective_labels_to_DOSPs} and \ref{lem:injective_edge_labels}, we have an injective map from $\{v \in V \mid \cover(v) = d\}$ and the set of hypersimplicial decorated ordered set partitions of type $(2,n)$ with winding number $d$.  Furthermore, it was shown in \cite{ocneanu2013} that the number of hypersimplicial decorated ordered set partitions of type $(2,n)$ is exactly the number of alcoves of $\Delta_{2,n}$, which is the Eulerian number $A(1,n-1)$.  Thus this map is actually a bijection.

\end{proof}

\bibliographystyle{alpha}
\bibliography{main}

\end{document}